\newtheorem{theorem}{Theorem}[section]
\newtheorem{lemma}[theorem]{Lemma}
\newtheorem{prop}[theorem]{Proposition}
\newtheorem{coro}[theorem]{Corollary}
\newtheorem{problem}{Open Problem}[section]
\theoremstyle{definition}
\theoremstyle{remark}
\newtheorem{remark}[theorem]{Remark}
\numberwithin{equation}{section}
\begin{document}

\title[extension and convergence of mean curvature flow]{The extension and convergence of mean curvature flow in higher
codimension}

\author{Kefeng Liu}
\address{Center of Mathematical Sciences, Zhejiang University,
             Hangzhou,  310027, People¡¯s Republic of China; Department of Mathematics, UCLA, Box 951555, Los Angeles, CA, 90095-1555 }
\email{liu@cms.zju.edu.cn, liu@math.ucla.edu}

\author{Hongwei Xu}

\address{Center of Mathematical Sciences, Zhejiang University,
             Hangzhou,  310027, People¡¯s Republic of China}
\email{xuhw@cms.zju.edu.cn}

\author{Fei Ye}
\address{Center of Mathematical Sciences, Zhejiang University,
             Hangzhou,  310027, People¡¯s Republic of China}
\email{yf@cms.zju.edu.cn}

\author{Entao Zhao}
\address{Center of Mathematical Sciences, Zhejiang University,
             Hangzhou,  310027, People¡¯s Republic of China}
\email{zhaoet@cms.zju.edu.cn}

\thanks{Research supported by the National Natural Science Foundation
of China, Grant No. 11071211; the Trans-Century Training Programme
Foundation for Talents by the Ministry of Education of China, and
the China Postdoctoral Science Foundation, Grant No. 20090461379.}

\subjclass[2000]{53C44, 53C40}

\date{}

\keywords{Mean curvature flow, submanifold, maximal existence time,
convergence theorem, integral curvature}

\begin{abstract}In this paper, we first investigate the integral curvature condition to extend the mean
curvature flow of submanifolds in a Riemannian manifold with
codimension $d\geq1$, which generalizes the extension theorem for
the mean curvature flow of hypersurfaces  due to Le-\v{S}e\v{s}um
\cite{LS} and the authors \cite{XYZ1,XYZ2}. Using the extension
theorem, we prove two convergence theorems for the mean curvature
flow of closed submanifolds in ${R}^{n+d}$ under suitable integral
curvature conditions.
\end{abstract}

\maketitle

\section{Introduction}

\label{intro}Let $F_0:M^{n}\rightarrow N^{n+d}$ be a smooth
immersion from an $n$-dimensional Riemannian manifold without
boundary to an $(n+d)$-dimensional Riemannian manifold. Consider a
one-parameter family of smooth immersions $F:M\times
[0,T)\rightarrow N$ satisfying
\begin{eqnarray*}\left\{
\begin{array}{ll}
\left(\frac{\partial}{\partial t}F(x,t)\right)^{\bot}=H(x,t),\\
F(x,0)=F_0(x),
\end{array}\right.
\end{eqnarray*}
where $\left(\frac{\partial}{\partial t}F(x,t)\right)^{\bot}$ is the
normal component of $\frac{\partial}{\partial t}F(x,t)$, $H(x,t)$ is
the mean curvature vector of $F_t(M)$ and $F_t(x)=F(x,t)$. We call
$F:M\times [0,T)\rightarrow N$ the mean curvature flow with initial
value $F_0:M\rightarrow N$. This is the general form of the mean
curvature flow, which is  a nonlinear weakly parabolic system and is
invariant under reparametrization of $M$. We can find a family of
diffeomorphisms $\phi_t:M\rightarrow M$ for $t\in [0,T)$ such that
$\bar{F}_t=F_t\circ\phi_t: M\rightarrow N$ satisfies
$\frac{\partial}{\partial t}\bar{F}(x,t)=\bar{H}(x,t)$. We will
study the (reparameterized) mean curvature flow

\begin{eqnarray}
\label{MCF}\left\{
\begin{array}{ll}
\frac{\partial}{\partial t}F(x,t)=H(x,t),\\
F(x,0)=F_0(x).
\end{array}\right.
\end{eqnarray}

In \cite{B}, Brakke introduced the motion of a submanifold by its
mean curvature in arbitrary codimension and constructed a
generalized varifold solution for all time. For the classical
solution of the mean curvature flow, most works have been done on
hypersurfaces. Huisken \cite{H1,H2} showed that if the second
fundamental form is uniformly bounded, then the mean curvature flow
can be extended over the time. He then proved that if the initial
hypersurface in a complete manifold with bounded geometry is compact
and uniformly convex,  then the mean curvature flow converges to a
round point in finite time. Many other beautiful results have been
obtained, and there are various  approaches to study the mean
curvature flow of hypersurfaces (see \cite{CGG,ES}, etc.). However,
relatively little is known about the mean curvature flows of
submanifolds in higher codimensions, see \cite{Sm,SW,WaM1,WaM2,WaM3}
etc. for example. Recently, Andrews-Baker \cite{Andrews-Baker}
proved a convergence theorem for the mean curvature flow of closed
submanifolds satisfying suitable pinching condition in the Euclidean
space.

On the other hand,  Le-\v{S}e\v{s}um \cite{LS} and Xu-Ye-Zhao
\cite{XYZ1} obtained some integral conditions to extend the mean
curvature flow of hypersurfaces in the Euclidean space
independently. Later, Xu-Ye-Zhao \cite{XYZ2} generalized these
extension theorems to the case where the ambient space is a
Riemannian manifold with bounded geometry.

For an $n$-dimensional  submanifold $M$ in a Riemannian manifold, we
denote by $g$ the induced metric on $M$. Let $A$ and $H$ be the
second fundamental form and the mean curvature vector of $M$,
respectively. In this paper, we first generalize the extension
theorems in \cite{LS,XYZ1,XYZ2} to the mean curvature flow of
submanifolds in a Riemannian manifold with bounded geometry.

\begin{theorem}\label{main-extension}
Let $F_t:M^n\rightarrow N^{n+d}$ $(n\geq3)$ be the mean curvature
flow solution of closed submanifolds in a finite time interval $[0,T)$, where $N$ has bounded geometry. If\\
(i) there exist positive constants $a$ and $b$ such that $|A|^2\leq a|H|^2+b$ for $t\in [0,T)$,\\
(ii) $||H||_{\alpha,M\times[0,T)}=\left(\int_0^T\int_{M_t}|H|^\alpha
d\mu_tdt\right)^{\frac{1}{\alpha}}<\infty$ for some
$\alpha\geq n+2$,\\
then this flow can be extended over time $T$.
\end{theorem}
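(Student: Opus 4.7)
The plan is to reduce the theorem to a uniform pointwise bound on $|A|$. Once $\sup_{M\times[0,T)}|A|^2 < \infty$ is established, the standard Huisken-type extension criterion (adapted to higher codimension in an ambient manifold of bounded geometry) allows the flow to be continued past $T$. Thus the entire content is to convert hypotheses (i) and (ii) into such an $L^\infty$ bound.

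\textbf{Step 1 (Evolution inequality).} The evolution equation for $|A|^2$ in higher codimension, together with the bounded-geometry hypothesis on $N$, yields a Simons-type differential inequality
\[
\frac{\partial}{\partial t}|A|^2 \;\leq\; \Delta |A|^2 - 2|\nabla A|^2 + c_1|A|^4 + c_2|A|^2 + c_3,
\]
where $c_2,c_3$ absorb the ambient curvature contributions. Using hypothesis (i) to replace $|A|^4$ by $a|H|^2|A|^2+b|A|^2$, and setting $f:=|A|^2+1$, this becomes
\[
\frac{\partial f}{\partial t} \;\leq\; \Delta f - 2|\nabla A|^2 + C\,|H|^2 f + C\,f.
\]

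\textbf{Step 2 (Energy estimate and Sobolev inequality).} For each $p\geq 1$, I would multiply the above by $p f^{p-1}$ and integrate over $M_t$, using the volume evolution $\partial_t d\mu_t = -|H|^2 d\mu_t$. Integrating in time and taking a supremum gives, for every $\tau\in[0,T)$,
\[
\sup_{0\leq t\leq\tau}\!\int_{M_t}\! f^p\,d\mu_t + \!\int_0^\tau\!\!\int_{M_t}\!|\nabla f^{p/2}|^2 \,d\mu_t\,dt \;\leq\; \int_{M_0}\! f^p\,d\mu_0 + C p\!\int_0^\tau\!\!\int_{M_t}(|H|^2 + 1)\, f^p \,d\mu_t\,dt.
\]
The Michael--Simon inequality (in its Hoffman--Spruck form for submanifolds of a manifold with bounded geometry), combined with this supremum in time via parabolic interpolation, yields the spacetime Sobolev gain
\[
\Bigl(\int_0^\tau\!\!\int_{M_t} f^{p(n+2)/n}\Bigr)^{\!n/(n+2)} \;\leq\; C\Bigl(\sup_t \!\int f^p + \!\int_0^\tau\!\!\int |\nabla f^{p/2}|^2 + \!\int_0^\tau\!\!\int |H|^2 f^p\Bigr).
\]

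\textbf{Step 3 (Moser iteration).} H\"older's inequality in spacetime bounds the source term as
\[
\int_0^\tau\!\!\int_{M_t}|H|^2 f^p \;\leq\; \|H\|_{\alpha,M\times[0,T)}^{2}\Bigl(\int_0^\tau\!\!\int f^{p\alpha/(\alpha-2)}\Bigr)^{\!(\alpha-2)/\alpha},
\]
and the hypothesis $\alpha\geq n+2$ is \emph{exactly} the condition $\alpha/(\alpha-2)\leq(n+2)/n$ needed so that the Sobolev gain dominates the H\"older loss. A De Giorgi--Moser iteration $p\mapsto p\cdot(n+2)/n$, starting from a sufficiently large $p_0$ (for which the corresponding norm of $f(0)$ is finite by smoothness of $F_0$ and compactness of $M$), produces bounds on the $L^p$-norms of $f$ whose $p$-th roots remain uniformly controlled; letting $p\to\infty$ delivers the desired $\sup_{M\times[0,T)}|A|^2<\infty$.

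\textbf{Main obstacle.} The delicate point is the endpoint $\alpha=n+2$, where the H\"older loss just matches the Sobolev gain: one must track the polynomial-in-$p$ dependence of the constants in each iterative step so that their infinite product converges. A secondary technical difficulty is the systematic presence of lower-order ambient-curvature terms, appearing both in the Simons-type evolution inequality and in the bounded-geometry Sobolev inequality, which must be absorbed consistently at every stage of the iteration. Modulo these two issues, the argument is structurally parallel to the hypersurface treatment in \cite{LS,XYZ1,XYZ2}.
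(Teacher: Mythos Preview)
Your approach differs substantially from the paper's, and there is a genuine gap at the critical exponent $\alpha=n+2$. In your Step~3, after H\"older you must absorb a term of the form
\[
Cp\,\|H\|_{n+2,[\tau,T)}^{2}\Bigl(\int_\tau^T\!\!\int_{M_t} f^{p(n+2)/n}\,d\mu_t\,dt\Bigr)^{n/(n+2)}
\]
into the left-hand side; this requires $Cp\,\|H\|_{n+2}^{2}<1$, which fails once $p$ is large, regardless of how carefully you track polynomial growth of constants. Shrinking the time interval to make $\|H\|_{n+2,[\tau,T)}$ small does not help either, because the smallness threshold must decay like $p^{-(n+2)/2}$ along the iteration while the tail $\int_t^T\!\int|H|^{n+2}$ can go to zero arbitrarily slowly as $t\to T$. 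Your proposed fix---tracking polynomial-in-$p$ dependence so that an infinite product converges---addresses a different issue (the multiplicative constants in a clean iteration) and does not touch this absorption failure.

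The paper sidesteps this entirely via a blow-up argument. It first proves, by Moser iteration, an estimate of the form $\max_{[T_0/2,T_0]}|H|^2\le C\,\|H\|_{n+2,[0,T_0]}^2$ with $C$ depending on $\sup_{[0,T_0]}|A|$ and the ambient bounds; here the iteration is run on the simple inequality $\partial_t|H|^2\le\Delta|H|^2+\beta|H|^2$ with $\beta$ a \emph{constant}, so no $L^{(n+2)/2}$ potential ever needs to be absorbed. This estimate is circular on the original flow, but becomes effective after parabolic rescaling by $Q^{(i)}=|H|^2(x^{(i)},t^{(i)})\to\infty$: hypothesis~(i) then forces $|A^{(i)}|\le C$ uniformly on the rescaled flows, so the Moser estimate applies with a uniform constant, while scale-invariance of the $L^{n+2}$ spacetime norm together with absolute continuity of $\int_0^T\!\int|H|^{n+2}$ drives the right-hand side to zero. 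Passing to a limit flow one gets $|\widetilde H|\equiv 0$ on $[\tfrac12,1]$ yet $|\widetilde H|(\widetilde x,1)=1$, a contradiction. It is precisely this interplay of hypothesis~(i), the scale-invariant exponent $n+2$, and rescaling that replaces the absorption step your direct iteration lacks.
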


Let $\mathring{A}$ be the tracefree second fundamental form, which
is defined by $\mathring{A}=A-\frac{1}{n}g\otimes H$. Denote by
$||\cdot||_p$ the $L^p$-norm of a function or a tensor field. We
obtain the following convergence theorems for the mean curvature
flow of closed submanifolds in the Euclidean space.

\begin{theorem}\label{main-convergence-A}
Let $F:M^n\rightarrow R^{n+d}$ $(n\geq3)$ be a smooth closed
submanifold.  Then for any fixed $p>1$, there is a positive constant
$C_1$ depending on $n,p, Vol(M)$ and $||A||_{n+2}$, such that if
\begin{equation*}||\mathring{A}||_{p}<C_1,\end{equation*}
 then the mean curvature flow with $F$
as initial value has a unique solution $F:M\times[0,T)\rightarrow
R^{n+d}$ in a finite maximal time interval, and $F_t$ converges
uniformly to a point $x\in R^{n+d}$ as $t\rightarrow T$. The
rescaled maps $\widetilde{F}_t=\frac{F_t-x}{\sqrt{2n(T-t)}}$
converge in $C^{\infty}$ to a limiting embedding $\widetilde{F}_T$
such that $\widetilde{F}_T(M)$ is the unit $n$-sphere in some
$(n+1)$-dimensional subspace of $R^{n+d}$.
\end{theorem}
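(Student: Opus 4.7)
The strategy is to reduce the theorem to the pointwise convergence result of Andrews--Baker \cite{Andrews-Baker} by converting the initial \emph{integral} smallness of $\mathring{A}$ into a pointwise pinching of the form $|A|^{2}\leq a|H|^{2}+b$, with $a$ close to $1/n$, that is preserved along the flow.

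Short-time existence produces a unique maximal smooth solution $F:M\times[0,T)\to\mathbb{R}^{n+d}$, and since $M$ is closed in $\mathbb{R}^{n+d}$, comparison with a circumscribing shrinking sphere forces $T<\infty$. The heart of the argument is the \emph{integral-to-pointwise} step. From the standard evolution equation one has
$$\frac{\partial}{\partial t}|\mathring{A}|^{2}\leq \Delta|\mathring{A}|^{2}-2|\nabla\mathring{A}|^{2}+c_{n}|A|^{2}|\mathring{A}|^{2},$$
with the extra higher-codimension terms controlled similarly. Multiplying by $|\mathring{A}|^{2(q-1)}$, integrating, and using the Michael--Simon Sobolev inequality (valid in all codimensions in $\mathbb{R}^{n+d}$), one absorbs the cubic term into the gradient term to obtain an inequality of the shape
$$\frac{d}{dt}\int_{M_{t}}|\mathring{A}|^{2q}\,d\mu_{t} \leq C(n,q)\,\Big(\int_{M_{t}}|A|^{n+2}d\mu_{t}\Big)^{2/(n+2)}\int_{M_{t}}|\mathring{A}|^{2q}d\mu_{t}.$$
The exponent $n+2$ is dictated by Sobolev scaling and matches the threshold in Theorem~\ref{main-extension}. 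Using $\|A\|_{n+2}$ as the base input and applying Gr\"onwall's inequality yields a uniform $L^{2q}$ bound on $\mathring{A}$ over a short interval $[0,t_{1}]$, provided $\|\mathring{A}(0)\|_{p}$ is small in terms of $n,p,\mathrm{Vol}(M),\|A\|_{n+2}$. A Stampacchia/Moser iteration built on the same Sobolev inequality then promotes this to a pointwise estimate $|\mathring{A}|^{2}\leq \varepsilon|H|^{2}+\delta$ at time $t_{1}$, with $\varepsilon$ arbitrarily small whenever $C_{1}$ is taken small enough.

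At time $t_{1}$ the flow lies in the Andrews--Baker pinching regime, so restarting from $t_{1}$ and invoking \cite{Andrews-Baker} (whose extension step is exactly of the type in Theorem~\ref{main-extension}) gives convergence to a round point $x\in\mathbb{R}^{n+d}$ at a finite extinction time, which by uniqueness is $T$. For the rescaled assertion, I would apply Huisken's monotonicity formula to $\widetilde{F}_{t}=(F_{t}-x)/\sqrt{2n(T-t)}$: the type-I pinching together with $|\mathring{A}|/|H|\to 0$ (furnished by the Andrews--Baker blow-up analysis) forces smooth subsequential convergence to a shrinking self-similar solution with $\mathring{A}\equiv 0$, which is necessarily a unit round $n$-sphere in some $(n+1)$-dimensional subspace of $\mathbb{R}^{n+d}$; uniqueness of this limit upgrades the convergence from subsequential to $C^{\infty}$ convergence of the full family.

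The main obstacle is the integral-to-pointwise step: the Stampacchia iteration must close using only $\|\mathring{A}\|_{p}$ for arbitrary $p>1$ together with $\|A\|_{n+2}$, rather than any pointwise bound on $A$. When $p<n$ one must first interpolate $\|\mathring{A}\|_{p}$ against an $L^{r}$ norm dominated by $\|A\|_{n+2}$ to initiate the iteration; this is precisely the structural dependence encoded in $C_{1}=C_{1}(n,p,\mathrm{Vol}(M),\|A\|_{n+2})$. Once the pointwise pinching is established, the remainder is essentially standard mean-curvature-flow machinery.
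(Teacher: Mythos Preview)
Your overall architecture matches the paper's: propagate the $L^{p}$ bound on $\mathring{A}$ for a definite time $T_{0}$ using the evolution inequality and the Michael--Simon Sobolev inequality (with $\|A\|_{n+2}$ controlling the reaction term), upgrade to a pointwise bound via parabolic Moser iteration, and then hand off to Andrews--Baker. The paper also uses the extension theorem here, but not inside Andrews--Baker as you suggest; it is used in a contradiction argument to show $T_{\max}>T_{0}$, so that there is a fixed time at which the iteration can be read off.

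There is, however, a genuine gap. Moser iteration on $|\mathring{A}|^{2}$ delivers a bound $|\mathring{A}|\le c\,\varepsilon$ on $[T_{0}/2,T_{0}]$---a \emph{constant}---not an inequality of the form $|\mathring{A}|^{2}\le \varepsilon|H|^{2}+\delta$. The Andrews--Baker hypothesis is $|A|^{2}\le c_{n}|H|^{2}$ with no additive constant, so to enter that regime from $|\mathring{A}|^{2}\le C\varepsilon^{2}$ you must also produce a \emph{positive lower bound on $|H|$} at some time $T_{3}\in[T_{0}/2,T_{0}]$. This is an entire step you have omitted, and it is where the dependence of $C_{1}$ on $\mathrm{Vol}(M)$ actually enters (not through interpolation). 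The paper gets it by (i) Chen's total mean curvature inequality $\int_{M_{t}}|H|^{n}\ge n^{n}\omega_{n}$, giving $|H|^{2}_{\max}(t)\ge n^{n}\omega_{n}/\mathrm{Vol}(M)$; (ii) Topping's diameter bound $\mathrm{diam}(M_{t})\le c_{n}\int_{M_{t}}|H|^{n-1}$; and (iii) a Bernstein-type estimate obtained by applying the maximum principle to $G=(t-\tfrac{T_{0}}{2})|\nabla\mathring{A}|^{2}+|\mathring{A}|^{2}$, which together with $|\nabla H|^{2}\le \tfrac{3n^{2}}{2(n-1)}|\nabla\mathring{A}|^{2}$ yields $|\nabla H|\le c\,\varepsilon$ at $T_{3}$. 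Integrating along a minimizing geodesic then gives $|H|^{2}_{\min}(T_{3})\ge |H|^{2}_{\max}(T_{3})-c'\varepsilon\ge c_{10}/2$ for $\varepsilon$ small. Only after this does $|\mathring{A}|^{2}\le C\varepsilon^{2}\le \tfrac{1}{n(n-1)}|H|^{2}$ hold and Andrews--Baker apply. Without this lower bound on $|H|$, your reduction does not close.
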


\begin{theorem}\label{main-convergence-H}
Let $F:M^n\rightarrow R^{n+d}$ $(n\geq3)$ be a smooth closed
submanifold. Then for any fixed $p>n$, there is a positive constant
$C_2$ depending on $n,p, Vol(M)$ and $||H||_{n+2}$, such that if
\begin{equation*}||\mathring{A}||_{p}<C_2,\end{equation*}
then the mean curvature flow with $F$ as initial value has a unique
solution $F:M\times[0,T)\rightarrow R^{n+d}$ in a finite maximal
time interval, and $F_t$ converges uniformly to a point $x\in
R^{n+d}$ as $t\rightarrow T$. The rescaled maps
$\widetilde{F}_t=\frac{F_t-x}{\sqrt{2n(T-t)}}$ converge in
$C^{\infty}$ to a limiting embedding $\widetilde{F}_T$  such that
$\widetilde{F}_T(M)$ is the unit $n$-sphere in some
$(n+1)$-dimensional subspace of $R^{n+d}$.\end{theorem}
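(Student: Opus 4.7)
The strategy is to reduce this theorem to the Andrews--Baker convergence theorem \cite{Andrews-Baker}, which guarantees that a closed submanifold in $\mathbb{R}^{n+d}$ satisfying the pointwise pinching $|A|^2 \leq c_n|H|^2$ (with $c_n$ an explicit dimensional constant) flows by mean curvature to a round point, with the rescaled maps converging smoothly to the unit $n$-sphere in an $(n+1)$-plane. The real work is to pass from the integral smallness $\|\mathring{A}\|_p < C_2$ with $p>n$ to the pointwise pinching at some time during the flow.

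First I would invoke standard parabolic theory to obtain a unique smooth solution on a maximal interval $[0,T^*)$. On this interval I would track both $\int_{M_t}|\mathring{A}|^p \, d\mu_t$ and $\int_{M_t}|H|^{n+2} \, d\mu_t$ and aim to show that neither grows too quickly. Starting from the Simons-type evolution inequality of \cite{Andrews-Baker}, namely $\frac{\partial}{\partial t}|\mathring{A}|^2 \leq \Delta |\mathring{A}|^2 - 2|\nabla \mathring{A}|^2 + C_n|A|^2|\mathring{A}|^2$, multiplying by $|\mathring{A}|^{p-2}$, integrating by parts over $M_t$, and splitting $|A|^2 = |\mathring{A}|^2 + |H|^2/n$, I expect to obtain
\[
\frac{d}{dt}\int_{M_t}|\mathring{A}|^p \, d\mu_t + \frac{4(p-1)}{p^2}\int_{M_t}\bigl|\nabla|\mathring{A}|^{p/2}\bigr|^2 \, d\mu_t \leq C\int_{M_t}\bigl(|\mathring{A}|^{p+2} + |H|^2|\mathring{A}|^p\bigr) \, d\mu_t.
\]

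The first term on the right is controlled by applying the Michael--Simon Sobolev inequality to $|\mathring{A}|^{p/2}$ on $M_t \subset \mathbb{R}^{n+d}$, absorbing the resulting $L^{p+2}$ contribution into the gradient term provided $\|\mathring{A}\|_p$ is sufficiently small (this is where $C_2$ absorbs the dependence on $\mathrm{Vol}(M)$ via the Sobolev constant). The second term is handled by H\"older's inequality: because $p>n$ is supercritical relative to the scale-invariant exponent, I can pair $|H|^2$ with the $L^{n+2}$-norm of $H$ and the remaining factor with a sub-unit power of $\|\mathring{A}\|_p$, again absorbing into the gradient integral. Running a parallel computation for $\int_{M_t}|H|^{n+2}\, d\mu_t$ keeps $\|H\|_{n+2}$ controlled along the flow.

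With these two integral quantities bounded on $[0,T^*)$, I would then run a Moser iteration (or De Giorgi--Stampacchia scheme) on the evolution inequality for $|\mathring{A}|^2$ to upgrade $L^p$-smallness, with $p>n$, to $L^\infty$-smallness: at some time $t_0 < T^*$ we obtain $|\mathring{A}|^2 \leq \varepsilon |H|^2$ pointwise. Combined with the decomposition $|A|^2 = |\mathring{A}|^2 + |H|^2/n$, this yields the Andrews--Baker pinching $|A|^2 \leq c_n |H|^2$ at $t_0$, and restarting the flow at $t_0$ and invoking \cite{Andrews-Baker} produces the finite maximal existence time $T$, uniform convergence $F_t \to x$, and smooth convergence of the rescaled embeddings $\widetilde{F}_t$ to the unit sphere in an $(n+1)$-dimensional subspace. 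The main obstacle is the Moser iteration step, where the supercriticality $p>n$ is essential to start the iteration and where the explicit dependence of $C_2$ on $n,p,\mathrm{Vol}(M)$ and $\|H\|_{n+2}$ enters through the Michael--Simon constant and the H\"older interpolation described above.
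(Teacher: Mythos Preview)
Your overall strategy matches the paper's: track $\|\mathring A\|_p$ and $\|H\|_{n+2}$ along the flow, run Moser iteration to get a pointwise bound $|\mathring A|\le C\varepsilon$ at some definite time, and then invoke Andrews--Baker. The differential inequalities and the Sobolev/H\"older absorptions you describe are essentially what the paper does.

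There is, however, a genuine gap at your Moser step. Moser iteration on $|\mathring A|^2$ yields $\sup_{M_{t_0}}|\mathring A|\le C\varepsilon$; it does \emph{not} yield the scale-invariant pinching $|\mathring A|^2\le\varepsilon|H|^2$ that Andrews--Baker requires. To pass from one to the other you must know that $|H|$ is bounded \emph{below} by a positive constant on $M_{t_0}$, and nothing in your outline produces such a bound. The paper fills this gap with a substantial extra argument: it uses the Chen--Willmore inequality $n^n\omega_n\le\int_{M_t}|H|^n$ to get $|H|_{\max}\ge c(n,V)$; it controls the diameter via Topping's estimate $\mathrm{diam}(M_t)\le c_n\int|H|^{n-1}$; and it derives a pointwise bound $|\nabla H|\le C\varepsilon$ by running a separate parabolic estimate on $G=(t-T_0/2)|\nabla\mathring A|^2+|\mathring A|^2$, using $|\nabla H|^2\le\frac{3n^2}{2(n-1)}|\nabla\mathring A|^2$. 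Integrating $|\nabla|H|^2|$ along a geodesic then gives $|H|^2_{\min}\ge |H|^2_{\max}-C\varepsilon\cdot\mathrm{diam}$, which is positive for $\varepsilon$ small. Without this chain of estimates your reduction to Andrews--Baker does not close.

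A secondary point: you assert the integral bounds hold on all of $[0,T^*)$ and then iterate, but Moser iteration only gives a pointwise bound after a definite waiting time, so you also need $T^*$ bounded below by a constant depending only on the data. The paper handles this by a continuity argument combined with the extension theorem (Theorem~\ref{main-extension}): if the controlled interval ended at $T_{\max}$, the $L^\infty$ bound on $|\mathring A|$ would give $|A|^2\le a|H|^2+b$ and $\|H\|_{n+2,M\times[0,T_{\max})}<\infty$, contradicting maximality.
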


As immediate consequences of the convergence theorems, we obtain the
following differentiable sphere theorems. First let $C_1$ be as in Theorem 1.2, we have
\begin{coro}
Let $F:M^n\rightarrow R^{n+d}$ $(n\geq3)$ be a smooth closed
submanifold.  If
\begin{equation*}||\mathring{A}||_{p}<C_1,\end{equation*}
for some $p>1$, then $M$ is diffeomorphic to the unit $n$-sphere.
\end{coro}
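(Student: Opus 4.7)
The plan is to deduce the corollary as an immediate consequence of Theorem \ref{main-convergence-A}; essentially no new work is required beyond invoking that theorem and extracting the topological content of its conclusion.

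First, I would note that under the hypothesis $\|\mathring{A}\|_p < C_1$ for some $p > 1$, the assumptions of Theorem \ref{main-convergence-A} are exactly satisfied. Applying that theorem, the mean curvature flow with initial value $F$ exists on a finite maximal time interval $[0,T)$, the solution contracts to a single point $x \in R^{n+d}$ as $t \to T$, and the rescaled maps $\widetilde{F}_t = (F_t - x)/\sqrt{2n(T-t)}$ converge in $C^{\infty}$ to a smooth limiting embedding $\widetilde{F}_T : M \to R^{n+d}$ whose image is the unit $n$-sphere in some $(n+1)$-dimensional linear subspace of $R^{n+d}$.

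Next, I would unpack the last statement. By the conclusion of Theorem \ref{main-convergence-A}, $\widetilde{F}_T$ is an embedding from the closed manifold $M^n$ onto the round $n$-sphere $S^n \subset R^{n+1} \subset R^{n+d}$. Viewed as a map $\widetilde{F}_T : M \to S^n$, it is a smooth injective immersion between compact connected manifolds of the same dimension, hence a diffeomorphism. Therefore $M$ is diffeomorphic to the unit $n$-sphere, which is exactly the conclusion of the corollary.

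There is no genuine obstacle in this argument, since all of the analytic and geometric heavy lifting is contained in Theorem \ref{main-convergence-A}. The only subtlety worth flagging is the logical step from ``$\widetilde{F}_T$ is a limiting embedding with image $S^n$'' to ``$\widetilde{F}_T$ is a diffeomorphism onto $S^n$,'' which uses only that an injective smooth immersion from a compact manifold onto a manifold of the same dimension is automatically a diffeomorphism. Once this is observed, the corollary is immediate.
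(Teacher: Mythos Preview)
Your proposal is correct and matches the paper's approach: the paper states this corollary as an ``immediate consequence'' of Theorem~\ref{main-convergence-A} and gives no separate proof. Your added remark, that a smooth injective immersion from a compact manifold onto a manifold of the same dimension is a diffeomorphism, simply makes explicit the one-line topological step the paper leaves implicit.
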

Similarly let $C_2$ be as Theorem 1.3, we have
\begin{coro}
Let $F:M^n\rightarrow R^{n+d}$ $(n\geq3)$ be a smooth closed
submanifold. If
\begin{equation*}||\mathring{A}||_{p}<C_2,\end{equation*}
for some $p>n$, then $M$ is diffeomorphic to the unit $n$-sphere.
\end{coro}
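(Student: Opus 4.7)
The plan is to deduce the corollary directly from Theorem 1.3. The hypothesis $||\mathring{A}||_p<C_2$ for some $p>n$ is exactly the hypothesis of Theorem 1.3, with $C_2=C_2(n,p,Vol(M),||H||_{n+2})$ taken to be the same constant produced by that theorem. Applying Theorem 1.3 to the given immersion $F$ therefore yields a mean curvature flow $F:M\times[0,T)\to R^{n+d}$ on a finite maximal interval, a point $x\in R^{n+d}$, and a smooth limiting embedding $\widetilde{F}_T:M\to R^{n+d}$ such that the rescaled maps $\widetilde{F}_t=(F_t-x)/\sqrt{2n(T-t)}$ converge to $\widetilde{F}_T$ in $C^{\infty}$, with $\widetilde{F}_T(M)$ equal to the unit $n$-sphere sitting in some $(n+1)$-dimensional affine subspace of $R^{n+d}$.

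From here the conclusion is immediate. Viewed as a map $M\to S^n=\widetilde{F}_T(M)$, the limit $\widetilde{F}_T$ is a smooth embedding between compact $n$-manifolds; equivalently, it is a smooth bijective immersion from a closed manifold onto a manifold of the same dimension. By the inverse function theorem together with injectivity and compactness, any such map is a diffeomorphism, so $M$ is diffeomorphic to the unit $n$-sphere. There is no genuine obstacle at this stage: all of the analytic difficulty is absorbed into Theorem 1.3, and the corollary is a straightforward topological reading of the $C^{\infty}$ convergence of the rescaled flow to an embedding of $M$ onto the round sphere.
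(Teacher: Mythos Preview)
Your proposal is correct and matches the paper's approach: the paper states Corollary 1.5 as an ``immediate consequence'' of Theorem \ref{main-convergence-H} without further argument, and your write-up simply makes explicit the trivial step that the $C^{\infty}$ limiting embedding $\widetilde{F}_T:M\to S^n$ furnishes the desired diffeomorphism.
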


We remark that in the above theorems and corollaries, we can replace
the volume $Vol(M)$ by a positive lower bound of $|H|$ in which case our method works
without change.

The paper is organized as follows. In section 2, we introduce some
basic equations in submanifold theory, and recall the evolution
equations of the second fundamental form along the mean curvature
flow. In section 3, by using the Moser iteration and blow-up method
for parabolic equations, we prove Theorem \ref{main-extension}.
Theorems \ref{main-convergence-A} and \ref{main-convergence-H} are
proved in section 4. In section 5, we propose some unsolved problems
on convergence of the mean curvature flow in higher codimension.

\section{Preliminaries}

\label{Pre} Let $F:M^{n}\rightarrow N^{n+d}$ be a smooth immersion
from an $n$-dimensional Riemannian manifold $M^n$ without boundary
to an $(n+d)$-dimensional Riemannian manifold $N^{n+d}$. We shall
make use of the following convention on the range of indices.
$$1\leq i,j,k,\cdots \leq n,\ \ 1\leq A,B,C, \cdots \leq n+d,\ \
and\ \ n+1\leq\alpha,\beta,\gamma, \cdots \leq n+d.$$ The Einstein
sum convention is used to sum over the repeated indices.

 Suppose $\{x^i\}$ is a local coordinate system on $M$ and $\{y^A\}$ is a
local coordinate system on $N$. The metric $g=\sum g_{ij}dx^i\otimes
dx^j$ on $M$ induced from the metric $\langle\ ,\ \rangle$ on $N$ by
$F$ is
\begin{equation*}g_{ij}=\bigg\langle F_{\ast}\Big(\frac{\partial}{\partial x^i}\Big) ,F_{\ast}\Big(\frac{\partial}{\partial x^j}\Big)\bigg\rangle.
\end{equation*} The volume form on $M$ is
$d\mu=\sqrt{\det(g_{ij})}dx$.

 For any $x\in M$, denoted by $N_xM$
the normal space of $M$ in $N$ at point $x$, which is  the
orthogonal complement of $T_xM$ in $F^{\ast}T_{F(x)}N$. Denote by
$\bar{\nabla}$ the Levi-Civita connection on $N$. The Riemannian
curvature tensor  $\bar{R}$  of $N$ is defined by
\begin{equation*}\bar{R}(U,V)W=-\bar{\nabla}_U\bar{\nabla}_VW+\bar{\nabla}_V\bar{\nabla}_UW+\bar{\nabla}_{[U,V]}W
\end{equation*}
for vector fields $U,V$ and $ W$ tangent to $N$. The induced
connection $\nabla$ on $M$ is defined by
\begin{equation*}\nabla_XY=(\bar{\nabla}_XY)^{\top},
\end{equation*} for $X,Y$ tangent to $M$, where $(\ )^\top$ denotes
tangential component. Let $R$ be the Riemannian curvature tensor of
$M$.

Given a normal vector field $\xi$ along $M$, the induced connection
$\nabla^\bot$ on the normal bundle is defined by
\begin{equation*}\nabla^\bot
_X\xi=(\bar{\nabla}_X\xi)^{\bot},\end{equation*} where $(\ )^{\bot}$
denotes the normal component. Let $R^\bot$ denote the normal
curvature tensor.

 The second fundamental form is defined to be
$$A(X,Y)=(\bar{\nabla}_XY)^\bot$$ as a section of the tensor bundle
$T^\ast M\otimes T^\ast M\otimes NM$, where $T^\ast M$ and $NM$ are
the cotangential bundle and  the normal bundle over $M$. The mean
curvature vector $H$ is the trace of the second fundamental form.

The first covariant derivative of $A$ is defined as
\begin{equation*}
(\widetilde{\nabla}_XA)(Y,Z)=\nabla^\bot_XA(Y,Z)-A(\nabla_XY,Z)-A(Y,\nabla_XZ),
\end{equation*}
where $\widetilde{\nabla}$ is the connection on $T^\ast M\otimes
T^\ast M\otimes NM$. Similarly, we can define the second covariant
derivative of $A$.

Choosing orthonormal bases $\{e_i\}_{i=1}^n$ for $T_xM$ and
$\{e_\alpha\}_{\alpha=n+1}^{n+d}$ for $N_xM$, the components of the
second fundamental form and its first and second covariant
derivatives are
\begin{equation*}\begin{split}
h^\alpha_{ij}=&\langle
A(e_i,e_j), e_\alpha\rangle,\\
h^\alpha_{ijk}=&\langle (\widetilde{\nabla} _{e_k}A)(e_i,e_j),
e_\alpha\rangle,\\
h^\alpha_{ijkl}=&\langle (\widetilde{\nabla}
_{e_l}\widetilde{\nabla}_{e_k}A)(e_i,e_j), e_\alpha\rangle.
\end{split}
\end{equation*}
The Laplacian of $A$ is defined by $\Delta
h^\alpha_{ij}=\sum_kh^{\alpha}_{ijkk}$.

We define the tracefree second fundamental form $\mathring{A}$ by
$\mathring{A}=A-\frac{1}{n}g\otimes H$, whose components are
$\mathring{A}^\alpha_{ij}=h^\alpha_{ij}-\frac{1}{n}h^\alpha_{kk}\delta_{ij}$.
Obviously, we have $\mathring{A}^\alpha_{ii}=0$.

Let \begin{equation*}
\begin{split}
R_{ijkl}&=g(R(e_i,e_j)e_k,e_l),\\
\bar{R}_{ABCD}&=\langle\bar{R}(e_A,e_B)e_C,e_D\rangle,\\
R^\bot_{ij\alpha\beta}&=\langle{R^\bot}(e_i,e_j)e_\alpha,e_\beta\rangle.
\end{split}
\end{equation*}
Then we have the following Gauss, Codazzi and Ricci equations.
\begin{equation*}
\begin{split}
R_{ijkl}&=\bar{R}_{ijkl}+h^\alpha_{ik}h^\alpha_{jl} -h^\alpha_{il}h^\alpha_{jk},\\
h^\alpha_{ijk}-h^\alpha_{ikj}&=-\bar{R}_{\alpha ijk},\\
R^\bot_{ij\alpha\beta}&=\bar{R}_{ij\alpha\beta}+h^\alpha_{ik}h^\beta_{jk}-h^\alpha_{jk}h^\beta_{ik}.
\end{split}
\end{equation*}

Suppose $F:M\times [0,T)\rightarrow N$ is the mean curvature flow
with initial value $F_0:M\rightarrow N$. We have the following
evolution equations.
\begin{lemma}[\cite{WaM1}]\label{evo-equ} Along
the mean curvature flow we have
\begin{equation}\label{evo-d mu}
\frac{\partial}{\partial t}d\mu_t=-|H|^2d\mu_t, \end{equation}
\begin{equation}\label{evo-A}
\begin{split}
\frac{\partial}{\partial t}h^\alpha_{ij}=&\Delta
h^\alpha_{ij}+\bar{R}_{\alpha ijk,k}+\bar{R}_{\alpha
kik,j}\\
&-2\bar{R}_{lijk}h^\alpha_{lk}+2\bar{R}_{\alpha \beta
jk}h^\beta_{ik}+2\bar{R}_{\alpha\beta ik}h^\beta_{jk}\\
&-\bar{R}_{lkik}h^\alpha_{lj}-\bar{R}_{lkjk}h^\alpha_{li}+\bar{R}_{\alpha k\beta k}h^{\beta}_{ij}\\
&-h^\alpha_{im}(h^\beta_{jm}h^\beta_{ll}-h^\beta_{km}h^\beta_{jk})\\
&-h^\alpha_{km}(h^\beta_{jm}h^\beta_{ik}-h^\beta_{km}h^\beta_{ij})\\
&-h^\beta_{ik}(h^\beta_{jl}h^\alpha_{kl}-h^\beta_{kl}h^\alpha_{jl})\\
&-h^\alpha_{jk}h^\beta_{ij}h^\beta_{ll}+h^\beta \langle e_\alpha,
\bar{\nabla}_H e_\beta \rangle,\\
\end{split}
\end{equation}
where $\bar{R}_{ABCD,E}$ are the components of the first covariant
derivative $\bar{\nabla}\bar{R}$ of $\bar{R}$.
\end{lemma}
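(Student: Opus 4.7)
The first equation follows from a short computation with the induced metric. Since $\partial_t F = H$ and $H$ is normal to $M$, differentiating $g_{ij} = \langle F_{\ast}\partial_i, F_{\ast}\partial_j\rangle$ in $t$ yields $\partial_t g_{ij} = -2\langle H, A(e_i,e_j)\rangle = -2 H^\alpha h^\alpha_{ij}$, and then the standard identity $\partial_t \sqrt{\det g} = \tfrac{1}{2}\sqrt{\det g}\, g^{ij}\partial_t g_{ij}$ gives $\partial_t d\mu_t = -H^\alpha h^\alpha_{ii}\, d\mu_t = -|H|^2 d\mu_t$, which is \eqref{evo-d mu}.

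For \eqref{evo-A}, I would fix a spacetime point $(x_0,t_0)$ and work with an orthonormal tangent frame $\{e_i\}$ on $M$ that is geodesic at $x_0$, together with an orthonormal normal frame $\{e_\alpha\}$ that is parallel at $x_0$ with respect to $\nabla^{\perp}$. In such a frame $h^\alpha_{ij} = \langle \bar{\nabla}_i F_{\ast}e_j, e_\alpha\rangle$; differentiating in $t$ using $\partial_t F = H$ produces three contributions: the principal term $\nabla^{\perp}_i\nabla^{\perp}_j H^\alpha$, quadratic-in-$h$ terms with an $H$-factor coming from the variation of the tangent frame via the Weingarten relation, and the term $h^\beta \langle e_\alpha, \bar{\nabla}_H e_\beta\rangle$ arising from the $t$-variation of the ambient parallel transport applied to the normal frame. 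The problem then reduces to converting $\nabla^{\perp}_i \nabla^{\perp}_j H^\alpha$ into $\Delta h^\alpha_{ij}$ plus identifiable lower order terms.

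This last step is a Simons-type identity in codimension $d$. Writing $H^\alpha = g^{kl} h^\alpha_{kl}$ and applying the Codazzi equation gives $\nabla^{\perp}_j H^\alpha = h^\alpha_{kjk} - \bar{R}_{\alpha kjk}$; applying $\nabla^{\perp}_i$ once more and commuting covariant derivatives produces
\begin{equation*}
\nabla^{\perp}_i \nabla^{\perp}_j H^\alpha = \Delta h^\alpha_{ij} + \bar{R}_{\alpha ijk,k} + \bar{R}_{\alpha kik,j} + (\text{commutator terms}),
\end{equation*}
and rewriting the commutator terms via the Gauss and Ricci equations produces exactly the linear-in-$\bar{R}$ linear-in-$h$ expressions $-2\bar{R}_{lijk}h^\alpha_{lk}$, $2\bar{R}_{\alpha\beta jk}h^\beta_{ik}$, $2\bar{R}_{\alpha\beta ik}h^\beta_{jk}$, $-\bar{R}_{lkik}h^\alpha_{lj}$, $-\bar{R}_{lkjk}h^\alpha_{li}$, $\bar{R}_{\alpha k\beta k}h^\beta_{ij}$, together with the cubic-in-$h$ pieces appearing in \eqref{evo-A}. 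Substitution into the time-derivative expression for $h^\alpha_{ij}$ then yields the stated formula. The main obstacle is pure bookkeeping: one must carefully disentangle the tangent and normal bundle commutators (the former producing $R_{ijkl}$ and hence, via Gauss, an $h\ast h$ plus $\bar{R}$ contribution; the latter producing $R^{\perp}_{ij\alpha\beta}$ and, via Ricci, another such contribution), and track every index permutation and sign so that the final collected expression matches \eqref{evo-A} exactly.
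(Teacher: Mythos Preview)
Your outline is the standard derivation and is correct in its structure: the volume-form evolution via $\partial_t g_{ij}=-2H^\alpha h^\alpha_{ij}$, and the evolution of $h^\alpha_{ij}$ via differentiating in $t$, then applying Codazzi to rewrite $\nabla^\perp_i\nabla^\perp_j H^\alpha$ as $\Delta h^\alpha_{ij}$ plus commutator terms, and finally expanding those commutators with Gauss and Ricci. Note, however, that the paper does not supply its own proof of this lemma at all---it is quoted directly from \cite{WaM1} (Wang, \emph{J.~Differential Geom.}~\textbf{57} (2001)), so there is no in-paper argument to compare against; your sketch is essentially the computation one finds in that reference.
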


Throughout of the paper, we  assume that the submanifold is
connected,  and the ambient space $N$ has bounded geometry. Recall
that a Riemannian manifold is said to have bounded geometry if (i)
the sectional curvature is bounded; (ii) the injective radius is
bounded from below by a positive constant. We always assume that $N$
is a Riemannian manifold with bounded geometry satisfying $-K_1\leq
K_N\leq K_2$ for nonnegative constant $K_1$, $K_2$, and the
injective radius of $N$ is bounded from below by a positive constant
$i_N$.

\section{The extension of  mean curvature flow}

In this section, we prove the extension theorem for the mean
curvature flow of submanifolds in arbitrary codimension. The
following Sobolev inequality can be found in \cite{HS}.

\begin{lemma}[\cite{HS}]\label{Sobo-ineq-HS}
Let $M^n\subset N^{n+d}$ be an $n(\geq2)$-dimensional closed
submanifold in a Riemannian manifold $N^{n+d}$ with codimension
$d\geq1$. Denote by $i_N$ the positive lower bound of the injective
radius of $N$ restricted on $M$. Assume the sectional curvature
$K_N$ of $N$ satisfies $K_N\leq b^2$. Let $h$ be a non-negative
$C^1$ function on $M$. Then
$$\left(\int_Mh^{\frac{n}{n-1}}d\mu\right)^{\frac{n-1}{n}}\leq
C(n,\alpha)\int_M\Big[|\nabla h|+h|H|\Big]d\mu,$$ provided
$$b^2(1-\alpha)^{-\frac{2}{n}}(\omega_n^{-1}Vol(supp\ h))^{\frac{2}{n}}\leq1\ and \ 2\rho_0\leq i_N,$$ where
\[\rho_0=\left\{ \begin{array}{ll}
b^{-1}\sin^{-1}b(1-\alpha)^{-\frac{1}{n}}(\omega_n^{-1}Vol(supp\ h))^{\frac{1}{n}}\ \ \ &\ for\ b\ real,\\
(1-\alpha)^{-\frac{1}{n}}(\omega_n^{-1}Vol(supp\ h))^{\frac{1}{n}}\
\ \ &\ for\ b\ imaginary.
\end{array}
\right.\] Here $\alpha$ is a free parameter, $0<\alpha<1$, and
$$C(n,\alpha)=\frac{1}{2}\pi\cdot2^{n-2}\alpha^{-1}(1-\alpha)^{-\frac{1}{n}}\frac{n}{n-1}\omega_n^{-\frac{1}{n}}.$$
For $b$ imaginary, we may omit the factor $\frac{1}{2}\pi$ in the
definition of $C(n,\alpha)$.\end{lemma}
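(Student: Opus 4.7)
The plan is to follow the strategy that extends the Michael--Simon Sobolev inequality from Euclidean space to Riemannian ambient manifolds of bounded geometry. The core idea is a monotonicity-type estimate for the mass ratio of $M$ in geodesic balls of $N$, localized by the injectivity radius and curvature hypotheses. I would first treat the classical Euclidean case and then transfer it to $N$ via the exponential map, using Rauch's comparison theorem to control the distortion.

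First I would set up the divergence identity on $M$. For a smooth non-negative compactly supported $h$ on $M$ and an auxiliary ambient vector field $X$ defined on a neighborhood of $\mathrm{supp}\,h$ in $N$, one has
\begin{equation*}
\int_M \mathrm{div}_M(hX)\, d\mu = -\int_M h\langle X,H\rangle\, d\mu,
\end{equation*}
because $M$ is closed as a subset of the support. A natural choice is $X(q)=\eta(r(q))\,\bar\nabla r(q)$ where $r(q)=\mathrm{dist}_N(x,q)$ for a fixed point $x\in M$ (well-defined when $r<i_N$), and $\eta$ is a suitable radial profile. The key Euclidean computation, due to Michael--Simon, is that for each $x\in M$ the quantity $I(\rho)=\rho^{-n}\int_{M\cap B_\rho(x)} h\, d\mu$ satisfies a differential inequality of the form $\rho I'(\rho)\geq -C(n)\,\rho^{-n}\int_{M\cap B_\rho}\bigl[|\nabla h|+h|H|\bigr]d\mu$, which upon integration from $0$ to a radius $\rho^*$ controls $h(x)$ pointwise by the right-hand side. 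A covering/rearrangement step then upgrades this pointwise bound to an $L^{n/(n-1)}$ bound.

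Second, to pass from $\mathbb{R}^{n+d}$ to a Riemannian $N$ with $K_N\leq b^2$, I would replace Euclidean balls by geodesic balls $B_\rho^N(x)$, use the exponential map $\exp_x:T_xN\supset B_\rho(0)\to B_\rho^N(x)$, and invoke Rauch comparison: the volume distortion of $\exp_x$ is bounded by a factor depending only on $b\rho$. The explicit radius $\rho_0$ given in the statement — $\rho_0=b^{-1}\sin^{-1}(b(1-\alpha)^{-1/n}(\omega_n^{-1}\mathrm{Vol}(\mathrm{supp}\,h))^{1/n})$ in the real case — is exactly the threshold at which this comparison factor is bounded by $(1-\alpha)^{-1/n}$; the hypothesis $b^2(1-\alpha)^{-2/n}(\omega_n^{-1}\mathrm{Vol}(\mathrm{supp}\,h))^{2/n}\leq 1$ is what makes the $\sin^{-1}$ legitimate, and $2\rho_0\leq i_N$ ensures $\exp_x$ is a diffeomorphism on the relevant ball. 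Running the Euclidean monotonicity argument on $T_xN$ and pulling back yields the same inequality on $N$ with the Euclidean constant multiplied by the comparison factor, which accounts for the parameter $\alpha$ and the explicit shape of $C(n,\alpha)$.

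The main obstacle I expect is not conceptual but technical: carefully tracking the constant through the monotonicity identity so that the final $C(n,\alpha)$ has the exact form $\tfrac{1}{2}\pi\cdot 2^{n-2}\alpha^{-1}(1-\alpha)^{-1/n}\tfrac{n}{n-1}\omega_n^{-1/n}$. This requires choosing the cutoff $\eta$ optimally (a truncated linear profile in $r$), handling the boundary term at $\rho^*$ via the volume-comparison factor $(1-\alpha)^{-1/n}$, and then using a layer-cake/coarea argument $h\mapsto \int_0^\infty \chi_{\{h>s\}}\,ds$ together with Hölder to convert the pointwise estimate into the $L^{n/(n-1)}$ inequality without losing the sharp $\alpha^{-1}$ dependence. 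The curvature correction from passing to $\exp_x^{-1}$ enters multiplicatively, so once one fixes the Euclidean constant the Riemannian one is forced; the delicacy lies in proving that the threshold conditions on $b$ and $\mathrm{Vol}(\mathrm{supp}\,h)$ are exactly those needed to absorb the distortion into the factor $(1-\alpha)^{-1/n}$.
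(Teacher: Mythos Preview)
The paper does not prove this lemma at all: it is stated with the citation \cite{HS} (Hoffman--Spruck) and used as a black box, with no argument supplied. Your outline is a faithful sketch of the original Hoffman--Spruck proof---the Michael--Simon monotonicity identity for $\rho^{-n}\int_{M\cap B_\rho}h\,d\mu$, transferred from $T_xN$ to $N$ via $\exp_x$ and Rauch comparison, with the parameters $\alpha$, $\rho_0$ and the hypothesis $2\rho_0\le i_N$ arising exactly as you describe---so there is nothing in the present paper to compare it against.
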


\begin{lemma}\label{Sobo-ineq}
Let $M^n\subset N^{n+d}$ be an $n(\geq3)$-dimensional closed
submanifold in a Riemannian manifold $N^{n+d}$ with codimension
$d\geq1$.  Assume $K_N\leq K_2$, where $K_2$ is a nonnegative
constant. Let $f$ be a non-negative $C^1$ function on $M$ satisfying
\begin{equation}
\label{cond-1}K_2(n+1)^{\frac{2}{n}}(\omega_n^{-1}Vol(supp\
f))^{\frac{2}{n}}\leq1,\end{equation}
\begin{equation}\label{cond-2}2K_2^{-\frac{1}{2}}\sin^{-1}K_2^{\frac{1}{2}}(n+1)^{\frac{1}{n}}(\omega_n^{-1}Vol(supp\
f))^{\frac{1}{n}}\leq i_N.
\end{equation} Then
$$||\nabla f||^2_2\geq\frac{(n-2)^2}{4(n-1)^2(1+s)}\left[\frac{1}{C^2(n)}
|| f||^2_{\frac{2n}{n-2}}-H^2_0\left(1+\frac{1}{s}\right)||
f||^2_2\right],$$ where $H_0=\max_{x\in M}|H|$,
$C(n)=C(n,\frac{n}{n+1})$ and $s>0$ is a free parameter.
\end{lemma}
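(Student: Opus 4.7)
The natural plan is to leverage Lemma \ref{Sobo-ineq-HS} (Hoffman--Spruck) with the specific choice $\alpha=\frac{n}{n+1}$ and a power-type substitution of $f$ in place of the test function $h$. Since $\frac{2n}{n-2}$ is the critical Sobolev exponent in dimension $n\geq 3$, the classical strategy is to set
$$h = f^{\frac{2(n-1)}{n-2}},$$
so that $h^{\frac{n}{n-1}} = f^{\frac{2n}{n-2}}$ on the left-hand side of Hoffman--Spruck. A quick check shows that with $\alpha=\frac{n}{n+1}$ we have $(1-\alpha)^{-1/n}=(n+1)^{1/n}$ and $b^2=K_2$, so the hypotheses (3.1)--(3.2) are precisely the volume/injectivity conditions required by Lemma \ref{Sobo-ineq-HS} (noting $\operatorname{supp} h=\operatorname{supp} f$), and $C(n,\tfrac{n}{n+1})=C(n)$.

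With this substitution, $|\nabla h|=\frac{2(n-1)}{n-2}\,f^{\frac{n}{n-2}}|\nabla f|$, so Lemma \ref{Sobo-ineq-HS} yields
$$\Big(\int_M f^{\frac{2n}{n-2}}d\mu\Big)^{\frac{n-1}{n}} \leq C(n)\int_M\Big[\tfrac{2(n-1)}{n-2}\,f^{\frac{n}{n-2}}|\nabla f| + f^{\frac{2(n-1)}{n-2}}|H|\Big]d\mu.$$
The next step is to apply Cauchy--Schwarz to each term on the right: the first term splits as $\|\nabla f\|_2\cdot\|f^{\frac{n}{n-2}}\|_2 = \|\nabla f\|_2\cdot \|f\|_{\frac{2n}{n-2}}^{\frac{n}{n-2}}$, and using $|H|\leq H_0$ the second term is bounded by $H_0\|f\|_2\cdot\|f\|_{\frac{2n}{n-2}}^{\frac{n}{n-2}}$ after writing $f^{\frac{2(n-1)}{n-2}}=f\cdot f^{\frac{n}{n-2}}$ and using Cauchy--Schwarz again. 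Dividing both sides by the common factor $\|f\|_{\frac{2n}{n-2}}^{\frac{n}{n-2}}$ (the exponent arithmetic is $\frac{2(n-1)}{n-2}-\frac{n}{n-2}=1$), one arrives at the linear-form inequality
$$\|f\|_{\frac{2n}{n-2}} \leq C(n)\Big[\tfrac{2(n-1)}{n-2}\|\nabla f\|_2 + H_0\|f\|_2\Big].$$

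Finally, square both sides and apply the Peter--Paul inequality $(a+b)^2\leq (1+s)a^2+(1+s^{-1})b^2$ with the free parameter $s>0$, then solve for $\|\nabla f\|_2^2$. This isolates $\|\nabla f\|_2^2\geq \frac{(n-2)^2}{4(n-1)^2(1+s)}\!\left[C(n)^{-2}\|f\|_{\frac{2n}{n-2}}^2 - H_0^2(1+s^{-1})\|f\|_2^2\right]$, which is exactly the claimed inequality. I do not expect a serious obstacle here: the proof is a standard rearrangement of Hoffman--Spruck, and the only subtle point is the bookkeeping needed to verify that the specific choice $\alpha=\frac{n}{n+1}$ matches the volume and injectivity constraints (3.1)--(3.2); this is mechanical once one writes out $(1-\alpha)^{-1/n}$. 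The Peter--Paul step is what introduces the free parameter $s$, which will later be chosen conveniently when this lemma is used inside the Moser iteration.
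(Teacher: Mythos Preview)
Your proposal is correct and follows essentially the same route as the paper's own proof: substitute $h=f^{\frac{2(n-1)}{n-2}}$ into the Hoffman--Spruck inequality with $\alpha=\frac{n}{n+1}$, apply Cauchy--Schwarz (the paper says H\"older, but with exponents $2,2$) to both integrals on the right, divide through by $\bigl(\int_M f^{\frac{2n}{n-2}}\,d\mu\bigr)^{1/2}$, square, and use $(a+b)^2\le (1+s)a^2+(1+s^{-1})b^2$. The only step the paper leaves implicit that you spell out is the verification that $(1-\alpha)^{-1/n}=(n+1)^{1/n}$ matches conditions (3.1)--(3.2).
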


\begin{proof}For all $g\in C^1(M)$, $g\geq0$ satisfying (\ref{cond-1}) and (\ref{cond-2}), Lemma
\ref{Sobo-ineq-HS} implies
\begin{equation}\label{ineq-g}||g||_{\frac{n}{n-1}}\leq
C(n)\int_M(|\nabla g|+g|H|)d\mu.
\end{equation}
Substituting $g=f^{\frac{2(n-1)}{n-2}}$ into (\ref{ineq-g}) gives
$$\left(\int_Mf^{\frac{2n}{n-2}}d\mu\right)^{\frac{n-1}{n}}\leq\frac{2(n-1)}{n-2}C(n)
\int_Mf^{\frac{n}{n-2}}|\nabla
f|d\mu+C(n)\int_MHf^{\frac{2(n-1)}{n-2}}d\mu.$$ By H\"{o}lder's
inequality, we get

\begin{equation*}
\begin{split}
\bigg(\int_{M}f^{\frac{2n}{n-2}}d\mu\bigg)^{\frac{n-1}{n}}\leq&
C(n)\bigg[\frac{2(n-1)}{n-2}\bigg(\int_Mf^{\frac{2n}{n-2}}d\mu\bigg)^{\frac{1}{2}}\bigg(\int_M|\nabla
f|^2d\mu\bigg)^{\frac{1}{2}}\\
&+\bigg(\int_M
H_0^2f^2d\mu\bigg)^{\frac{1}{2}}\bigg(\int_Mf^{\frac{2n}{n-2}}d\mu\bigg)^{\frac{1}{2}}\bigg]
\end{split}
\end{equation*}
Then
\begin{equation*}
\bigg(\int_{M}f^{\frac{2n}{n-2}}d\mu\bigg)^{\frac{n-2}{2n}}\leq
C(n)\bigg[\frac{2(n-1)}{n-2}\bigg(\int_M|\nabla
f|^2d\mu\bigg)^{\frac{1}{2}} +\bigg(\int_M
H_0^2f^2d\mu\bigg)^{\frac{1}{2}}\bigg].
\end{equation*}

This implies \begin{equation*}|| f||^2_{\frac{2n}{n-2}}\leq
C^2(n)\left[\frac{4(n-1)^2(1+s)}{(n-2)^2}||\nabla f||^2_2
+H^2_0\left(1+\frac{1}{s}\right)||f||^2_2\right],
\end{equation*} which is
desired.
\end{proof}

Now we establish an inequality involving the maximal value of the
squared norm of the mean curvature vector and its $L^{n+2}$-norm in
the space-time.
\begin{prop}\label{ineq-integral} Suppose that $F_t:M^n\rightarrow N^{n+d}$ $(n\geq3)$ is the
mean curvature flow solution for $t\in[0,T_0]$, where $N$ has
bounded geometry. Then
$$\max_{(x,t)\in M\times [\frac{T_0}{2},T_0]} |H|^2(x,t)\leq C\left(\int^{T_0}_0\int_{M_t}
 |H|^{n+2}d\mu_t dt\right)^{\frac{2}{n+2}},$$
where $C$ is a constant depending only on $n$, $T_0$,
$\sup_{(x,t)\in M\times[0,T_0]}|A|$, $K_1$, $K_2$ and $i_N$.
\end{prop}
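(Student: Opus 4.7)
The plan is to run a parabolic Moser iteration on the scalar function $u=|H|^{2}$ using the Michael--Simon Sobolev inequality of Lemma \ref{Sobo-ineq}. First, starting from the evolution equation (\ref{evo-A}) and contracting with $H$, I would derive a Simons-type differential inequality of the form
\begin{equation*}
\frac{\partial}{\partial t}u \leq \Delta u - 2|\nabla^{\perp}H|^{2} + C_{0}(u+1),
\end{equation*}
where $C_{0}$ depends only on $n$, $\sup|A|$, $K_{1}$, $K_{2}$ and the ambient curvature-derivative bounds afforded by bounded geometry. The cubic Simons-type term $|A|^{2}|H|^{2}$ is absorbed into $C_{0}u$ via the hypothesized $L^{\infty}$-bound on $|A|$, and the ambient curvature contractions into $C_{0}(u+1)$.

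Next, testing the inequality above against $p\,u^{p-1}\eta(t)^{2}$ for $p\geq (n+2)/2$ with $\eta$ a time cut-off supported in $[\tau_{1},T_{0}]$ and equal to $1$ on $[\tau_{2},T_{0}]$, integrating by parts on $M_{t}$, and passing $\partial_{t}$ through $d\mu_{t}$ via (\ref{evo-d mu}), one obtains the standard energy estimate
\begin{equation*}
\sup_{t\in[\tau_{2},T_{0}]}\int_{M_{t}}u^{p}\,d\mu_{t}+\int_{\tau_{2}}^{T_{0}}\!\int_{M_{t}}|\nabla u^{p/2}|^{2}\,d\mu_{t}\,dt\leq\frac{C_{1}p^{2}}{\tau_{2}-\tau_{1}}\int_{\tau_{1}}^{T_{0}}\!\int_{M_{t}}(u^{p}+1)\,d\mu_{t}\,dt.
\end{equation*}
Applying Lemma \ref{Sobo-ineq} to $f=u^{p/2}$ in the space variable -- the $H_{0}$-term being controlled by $n\sup|A|$, and the volume hypothesis being ensured by the bounded geometry of $N$ together with the local volume density of $M_{t}$ coming from the $|A|$-bound -- and interpolating in time between $L^{\infty}_{t}L^{2}_{x}$ and $L^{2}_{t}L^{2n/(n-2)}_{x}$ of $u^{p/2}$ produces the self-improving iteration inequality
\begin{equation*}
\left(\int_{\tau_{2}}^{T_{0}}\!\int_{M_{t}}u^{p\cdot\frac{n+2}{n}}\,d\mu_{t}\,dt\right)^{\!\!\frac{n}{n+2}}\leq\frac{C_{2}p^{2}}{\tau_{2}-\tau_{1}}\int_{\tau_{1}}^{T_{0}}\!\int_{M_{t}}(u^{p}+1)\,d\mu_{t}\,dt.
\end{equation*}

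Finally I would iterate this inequality with exponents $p_{k}=\bigl(\tfrac{n+2}{n}\bigr)^{k}\tfrac{n+2}{2}$ and telescoping times $\tau_{k}\nearrow T_{0}/2$ chosen so that $\sum 2^{k}/(\tau_{k+1}-\tau_{k})<\infty$; the initial step $p_{0}=(n+2)/2$ makes $u^{p_{0}}=|H|^{n+2}$, matching the quantity appearing on the right-hand side of the claimed estimate. Taking logarithms, the accumulated constant is $\prod_{k}\bigl(C_{2}p_{k}^{2}/(\tau_{k+1}-\tau_{k})\bigr)^{1/p_{k}}$, which is finite because $\sum_{k}(\log p_{k})/p_{k}<\infty$; sending $k\to\infty$ and absorbing the additive $+1$ into the multiplicative constant (which is legitimate because the interesting regime is when the right-hand side is large) gives the desired bound on $\|u\|_{L^{\infty}(M\times[T_{0}/2,T_{0}])}$. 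The main obstacle is the delicate bookkeeping in this Moser iteration: one must track the joint $p$- and $\tau$-dependence of the energy and Sobolev constants so that the infinite product converges, which is the standard but essential technical core. A secondary subtlety is the uniform verification of the volume hypothesis in Lemma \ref{Sobo-ineq} along the flow, which uses the $|A|$-bound and the bounded geometry of $N$ via a local density argument rather than a global $\mathrm{Vol}(M_{t})$ bound.
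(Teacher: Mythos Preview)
Your overall strategy---parabolic Moser iteration driven by the Michael--Simon Sobolev inequality of Lemma~\ref{Sobo-ineq}---is exactly the paper's approach, and your bookkeeping of the iteration (exponents $p_k=\bigl(\tfrac{n+2}{n}\bigr)^k\tfrac{n+2}{2}$, telescoping times, convergence of the product) matches the paper's computation. However, there is a genuine gap in how you invoke Lemma~\ref{Sobo-ineq}.

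The hypotheses (\ref{cond-1})--(\ref{cond-2}) of Lemma~\ref{Sobo-ineq} are \emph{upper} bounds on $\mathrm{Vol}(\mathrm{supp}\,f)$. If you apply the lemma to $f=u^{p/2}$ with only a time cut-off, then $\mathrm{supp}\,f$ is essentially all of $M_t$, and $\mathrm{Vol}(M_t)$ is not controlled by $n$, $T_0$, $\sup|A|$, $K_1$, $K_2$, $i_N$. Your proposed ``local density argument'' cannot repair this: the conditions are global thresholds on the volume of the support, not local density statements. The paper handles this by introducing, in addition to the time cut-off, a \emph{spatial} cut-off $\eta$ supported in a geodesic ball $B_{g(0)}(x,R')$; it then uses the Gauss equation together with the $|A|$-bound and the curvature bounds on $N$ to obtain a lower sectional curvature bound for $M_0$, and applies Bishop--Gromov to choose $R'$ depending only on the allowed constants so that (\ref{cond-1})--(\ref{cond-2}) hold for $\eta$ at $t=0$ (and hence, since volume is non-increasing, for all $t$). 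One must also track how $|\nabla\eta|_{g(t)}$ drifts from $|\nabla\eta|_{g(0)}$ via the metric evolution, which costs a factor $e^{CT_0}$ with $C$ depending on $\sup|A|$.

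A related secondary issue: the additive $+1$ in your differential inequality feeds a term $\int_{M_t}1\,d\mu_t=\mathrm{Vol}(M_t)$ into each iteration step, again uncontrolled by the stated constants; you cannot simply ``absorb it into the multiplicative constant'' without making $C$ depend on $\mathrm{Vol}(M)$. The paper in fact derives the cleaner inequality $\partial_t|H|^2\leq\Delta|H|^2+\beta|H|^2$ with no inhomogeneous term, but even with your version the spatial cut-off would resolve the problem, since then the volume in play is $\mathrm{Vol}_{g(0)}(B(R'))$, which \emph{is} controlled.
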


\begin{proof} In the following proof, we always denote by $C$ the
constant depending on some quantities. We make use of Moser
iteration for parabolic equations.  Here we follow the computation
in \cite{DWY}. From the evolution equation of the second fundamental
form  in Lemma \ref{evo-equ}, we have the following differential
inequality.
\begin{equation}\label{ineq-H2}\frac{\partial}{\partial t}|H|^2\leq\triangle |H|^2+\beta
|H|^2, \end{equation}
where $\beta$ is a positive constant depending
only on $n$, $\sup_{(x,t)\in M\times[0,T_0]}|A|$, $K_1$ and $K_2$.
For $0<R<R'<\infty$ and $x\in M$, we set
\[\eta=\left\{ \begin{array}{ll}
1&\ \ \ \ \ \ x\in B_{g(0)}(x,R),\\
\eta\in [0,1]\ and \ |\nabla\eta|_{g(0)}\leq\frac{1}{R'-R}&\ \ \ \ \ \ x\in B_{g(0)}(x,R')\setminus B_{g(0)}(x,R),\\
0&\ \ \ \ \ \ x\in M\setminus B_{g(0)}(x,R').
\end{array}
\right.\] Since $supp\ \eta\subseteq B_{g(0)}(x,R')$, we assume that
$R'$ is sufficiently small such that  $\eta$ satisfies
(\ref{cond-1}) and (\ref{cond-2}) with respect to $g(0)$. On the
other hand, the area of some fixed subset in $M$ is non-increasing
along the mean curvature flow, hence $\eta$ satisfies (\ref{cond-1})
and (\ref{cond-2}) with respect to each $g(t)$ for $t\in [0,T_0]$.
Putting $f=|H|^2$ and $B(R')=B_{g(0)}(x,R')$, the inequality
(\ref{ineq-H2}) implies that, for any $q\geq2$,
\begin{equation}\label{ineq-1}
\begin{split}
\frac{1}{q}\frac{\partial}{\partial t}\int_{B(R')}f^q\eta^2d\mu_t
&\leq\int_{B(R')}\left(\eta^2 f^{q-1}\triangle fd\mu_t +\beta
f^q\eta^2\right)d\mu_t
+\int_{B(R')}\frac{1}{q}f^q\eta^2\frac{\partial}{\partial
t}d\mu_t\\
&=\int_{B(R')}\left(\eta^2 f^{q-1}\triangle fd\mu_t +\beta
f^q\eta^2\right)d\mu_t-\int_{B(R')}\frac{1}{q}f^{q+1}\eta^2d\mu_t\\
&\leq\int_{B(R')}\left(\eta^2 f^{q-1}\triangle fd\mu_t +\beta
f^q\eta^2\right)d\mu_t.
\end{split}
\end{equation}
Here we have used the evolution equation of the volume form in Lemma
\ref{evo-equ}. Integrating  by parts we obtain
\begin{equation}\label{ineq-2}
\begin{split}
\int_{B(R')}\eta^2 f^{q-1}\triangle fd\mu_t
=&-\frac{4(q-1)}{q^2}\int_{B(R')}|\nabla(f^{\frac{q}{2}}\eta)|^2d\mu_t
+\frac{4}{q^2}\int_{B(R')}|\nabla\eta|^2f^q d\mu_t\\
&+\frac{4(q-2)}{q^2}\int_{B(R')}\langle\nabla(f^{\frac{q}{2}}\eta),f^{\frac{q}{2}}\nabla\eta\rangle d\mu_t\\
\leq&-\frac{2}{q}\int_{B(R')}|\nabla(f^{\frac{q}{2}}\eta)|^2d\mu_t+\frac{2}{q}\int_{B(R')}|\nabla\eta|^2f^qd\mu_t.
\end{split}
\end{equation}

Thus by (\ref{ineq-1}) and (\ref{ineq-2}) we obtain
\begin{equation}
\begin{split}
\frac{1}{q}\frac{\partial}{\partial t}\int_{B(R')} f^q\eta^2d\mu_t
\leq&-\frac{2}{q}\int_{B(R')}|\nabla(f^{\frac{q}{2}}\eta)|^2d\mu_t\\
&+\beta\int_{B(R')}f^q\eta^2d\mu_t+\frac{2}{q}\int_{B(R')}|\nabla\eta|^2f^qd\mu_t.
\end{split}\end{equation}
 This implies
\begin{equation}\label{ineq-3}
\begin{split}\frac{\partial}{\partial t}\int_{B(R')}
f^q\eta^2d\mu_t&+\int_{B(R')}|\nabla(f^{\frac{q}{2}}\eta)|^2d\mu_t\\
&\leq2\int_{B(R')}|\nabla\eta|^2f^qd\mu_t +\beta
q\int_{B(R')}f^q\eta^2d\mu_t. \end{split}\end{equation}
For any $0<\tau<\tau'<T_0$, define a function $\psi$ on $[0,T_0]$ by\\
\[\psi(t)=\left\{ \begin{array}{ll}
0&\ \ \ \ \ \ 0\leq t\leq \tau,\\
\frac{t-\tau}{\tau'-\tau}&\ \ \ \ \ \ \tau\leq t\leq \tau',\\
1&\ \ \ \ \ \ \tau'\leq t\leq T_0.
\end{array}
\right.\]
 Then from (\ref{ineq-3}) we get
\begin{equation}\label{ineq-4}
\begin{split}
\frac{\partial}{\partial t}\left(\psi\int_{B(R')}
f^q\eta^2d\mu_t\right)d\mu_t+&\psi\int_{B(R')}|\nabla(f^{\frac{q}{2}}\eta)|^2d\mu_t\\
&\leq 2\psi\int_{B(R')}|\nabla\eta|^2f^qd\mu_t+(\beta
q\psi+\psi')\int_{B(R')} f^q\eta^2d\mu_t.
\end{split}\end{equation}

For any $t\in [\tau',T_0]$, integrating both sides of (\ref{ineq-4})
on $[\tau,t]$ implies
\begin{equation}\label{ineq-5}
\begin{split}
\int_{B(R')}
f^q\eta^2d\mu_t&+\int^t_{\tau'}\int_{B(R')}|\nabla(f^{\frac{q}{2}}\eta)
|^2d\mu_tdt\\
&\leq2\int^{T_0}_\tau\int_{B(R')}|\nabla\eta|^2f^qd\mu_tdt
+\left(\beta
q+\frac{1}{\tau'-\tau}\right)\int^{T_0}_\tau\int_{B(R')}
f^q\eta^2d\mu_tdt. \end{split}\end{equation}

By the Sobolev inequality in Lemma \ref{Sobo-ineq}, we obtain
\begin{equation}\label{ineq-5'}
\begin{split}
\left(\int_{B(R')}
f^{\frac{qn}{n-2}}\eta^{\frac{2n}{n-2}}d\mu_t\right)^{\frac{n-2}{n}}=&||
f^{\frac{q}{2}}\eta||^2_{\frac{2n}{n-2}}\\
\leq&\frac{4(n-1)^2(1+s)C^2(n)}{(n-2)^2}||\nabla(f^{\frac{q}{2}}\eta)||^2_2\\
&+CC^2(n)\Big(1+\frac{1}{s}\Big)|| f^{\frac{q}{2}}\eta||^2_2,
\end{split}\end{equation}
where $C$ depends on $n$ and $\sup_{(x,t)\in M\times[0,T_0]}|A|$.
Combining  (\ref{ineq-5}) and (\ref{ineq-5'}) implies that
\begin{equation}\label{ineq-6}
\begin{split}
&\int^{T_0}_{\tau'}\int_{B(R')}
f^{q(1+\frac{2}{n})}\eta^{2+\frac{1}{n}}d\mu_tdt\\
\leq&\int^{T_0}_{\tau'}\left(\int_{B(R')}
f^{q}\eta^2d\mu_t\right)^{\frac{2}{n}}\left(\int_{B(R')}
f^{\frac{nq}{n-2}}\eta^{\frac{2n}{n-2}}
\mu_t\right)^{\frac{n-2}{n}}dt\\
\leq&\max_{t\in [\tau',T_0]}\left(\int_{B(R')}
f^{q}\eta^2d\mu_t\right)^{\frac{2}{n}}\times\int_\tau^{T_0}
\Big[\frac{4(n-1)^2(1+s)C^2(n)}{(n-2)^2}||\nabla(f^{\frac{q}{2}}\eta)||^2_2\\
&+CC^2(n)\Big(1+\frac{1}{s}\Big)||
f^{\frac{q}{2}}\eta||^2_2\Big]dt\\
\leq&C\max_{t\in [\tau',T_0]}\left(\int_{B(R')}
f^{q}\eta^2d\mu_t\right)^{\frac{2}{n}}\times\int_\tau^{T_0}\Big[||\nabla(f^{\frac{q}{2}}\eta)||^2_2+||
f^{\frac{q}{2}}\eta||^2_2\Big]dt\\
\leq&
C\bigg[2\int^{T_0}_{\tau}\int_{B(R')}|\nabla\eta|^2f^qd\mu_tdt\\
&+\Big(\beta q+\frac{1}{\tau'-\tau}\Big)\int^{T_0}_\tau\int_{B(R')}
f^q\eta^2d\mu_tdt\bigg]^{1+\frac{2}{n}},
\end{split}\end{equation}
where we have put $s=1$ and $C$ is a constant depending only on $n$
and $\sup_{(x,t)\in M\times[0,T_0]}|A|$.

Note that $|\nabla\eta|_{g(t)}\leq|\nabla\eta|^2_{g(0)}e^{lt}$,
where $l=\max_{0\leq t\leq T_0}||\frac{\partial g}{\partial
t}||_{g(t)}$. Thus
\begin{equation*}
\begin{split}\int^{T_0}_{\tau}\int_{B(R')}|\nabla\eta|^2f^qd\mu_tdt\leq&
\int^{T_0}_{\tau}\int_{B(R')}|\nabla\eta|^2_{g(0)}e^{lt}f^qd\mu_tdt\\
\leq&
\frac{e^{CT_0}}{(R'-R)^2}\int^{T_0}_{\tau}\int_{B(R')}f^qd\mu_tdt,
\end{split}
\end{equation*} for some positive  constant $C$  depending on $n$ and
$\sup_{(x,t)\in M\times [0,T_0]}|A|$. This together with
(\ref{ineq-6}) implies that
\begin{equation} \label{ineq-7}\begin{split}
\int^{T_0}_{\tau}\int_{B(R)} f^{q(1+\frac{2}{n})}d\mu_tdt\leq&
C\left(\beta q+\frac{1}{\tau'-\tau}+\frac{2e^{CT_0}}{(R'-R)^2}\right)^{1+\frac{2}{n}}\\
&\times\left(\int^{T_0}_\tau\int_{B(R')}
f^qd\mu_tdt\right)^{1+\frac{2}{n}},
\end{split}\end{equation} where
$C$ is a positive constant depending on $n$ and $\sup_{(x,t)\in
M\times[0,T_0]}|A|$.

Putting $L(q,t,R)=\int^{T_0}_{t}\int_{B(R)}f^{q}d\mu_tdt$, we
have from (\ref{ineq-7})
\begin{equation}
\label{ineq-8}L\Big(q\Big(1+\frac{2}{n}\Big),\tau',R\Big)\leq
C\left(\beta
q+\frac{1}{\tau'-\tau}+\frac{2e^{CT_0}}{(R'-R)^2}\right)^{1+\frac{2}{n}}
L(q,\tau,R')^{1+\frac{2}{n}}.\end{equation} We set
\begin{equation*}\mu=1+\frac{2}{n},\ q_k=\frac{n+2}{2}\mu^k,\
\tau_k=\Big(1-\frac{1}{\mu^{k+1}}\Big)t, \
R_k=\frac{R'}{2}(1+\frac{1}{\mu^{k/2}}).\end{equation*} Then it
follows from (\ref{ineq-8}) that
\begin{equation}\begin{split}&L(q_{k+1},\tau_{k+1},R_{k+1})^{\frac{1}{q_{k+1}}}\\
\leq& C^{\frac{1}{q_{k+1}}}
\Big[\frac{(n+2)\beta}{2}+\frac{\mu^2}{\mu-1}\cdot\frac{1}{t}+\frac{4e^{CT_0}}{R'^2}\cdot\frac{\mu}
{(\sqrt{\mu}-1)^2}\Big]^{\frac{1}{q_k}}\\
&\times\mu^{\frac{k}{q_{k}}}L(q_{k},\tau_{k},R_k)^{\frac{1}{q_{k}}}.\nonumber
\end{split}\end{equation} Hence
\begin{equation}\begin{split}\label{aaaa}
&L(q_{m+1},\tau_{m+1},R_{m+1})^{\frac{1}{q_{m+1}}}\\
\leq& C^{\sum^m_{k=0}\frac{1}{q_{k+1}}}
\Big[\frac{(n+2)\beta}{2}+\frac{\mu^2}{\mu-1}\cdot\frac{1}{t}+\frac{4e^{CT_0}}{R'^2}\cdot
\frac{\mu}{(\sqrt{\mu}-1)^2}\Big]^{\sum^m_{k=0}\frac{1}{q_{k}}}\\
&\times\mu^{\sum^m_{k=0}\frac{k}{q_{k}}}L(q_{0},\tau_{0},R_0)^{\frac{1}{q_{0}}}.
\end{split}\end{equation}
 As $m\rightarrow+\infty$, we conclude from (\ref{aaaa}) that
\begin{equation}\label{ineq-9}f(x,t)\leq
C^{\frac{n}{n+2}}\Big(C+\frac{1}{t}+\frac{e^{CT_0}}{R'^2}\Big)\Big(1+\frac{2}{n}\Big)^{\frac{n}{2}}
\left(\int^{T_0}_0\int_{M_t}f^{\frac{{n+2}}{2}}d\mu_tdt\right)^{\frac{2}{n+2}},
\end{equation} for some positive constant $C$
depending on $n$, $\sup_{M\times [0,T]}$, $K_1$ and $K_2$.

Note that we choose $R'$ sufficient small such that
\begin{equation}\label{cond-3}K_2(n+1)^{\frac{2}{n}}(\omega_n^{-1}Vol_{g(0)}(B(R'))^{\frac{2}{n}}\leq1,\end{equation}
and
\begin{equation}\label{cond-4}2K_2^{-\frac{1}{2}}\sin^{-1}K_2^{\frac{1}{2}}(n+1)^{\frac{1}{n}}
(\omega_n^{-1}Vol_{g(0)}(B(R'))^{\frac{1}{n}}\leq i_N.\end{equation}
For $g(0)$, there is a non-positive constant $K$ depending on $n$,
$\max_{x\in M_0}|A|$, $K_1$ and $K_2$ such that the sectional
curvature of $M_0$ is bounded from below by $K$. By the
Bishop-Gromov volume comparison theorem, we have
$$Vol_{g(0)}(B(R'))\leq Vol_K(B(R')),$$ where $Vol_K(B(R'))$ is the
volume of a ball with radius $R'$ in the $n$-dimensional complete
simply connected space form with constant curvature $K$. Let $R'$ be
the largest number such that
\begin{equation}K_2(n+1)^{\frac{2}{n}}(\omega_n^{-1}Vol_K(B(R'))^{\frac{2}{n}}\leq1,\nonumber\end{equation}
and
\begin{equation}2K_2^{-\frac{1}{2}}\sin^{-1}K_2^{\frac{1}{2}}(n+1)^{\frac{1}{n}}
(\omega_n^{-1}Vol_K(B(R'))^{\frac{1}{n}}\leq
i_N.\nonumber\end{equation}
 Then $R'$ depends only on $n$, $K_1$,
$K_2$, $i_N$ and $\sup_{(x,t)\in M\times[0,T_0]}|A|$, and
$Vol_{g(0)}(B(R'))$ satisfies (\ref{cond-3}) and (\ref{cond-4}).
This together with (\ref{ineq-9}) implies
$$\max_{(x,t)\in M\times[\frac{T_0}{2},T_0]}|H|^2(x,t)
\leq C\left(\int^{T_0}_0\int_{M_t}|
H|^{n+2}d\mu_tdt\right)^{\frac{2}{n+2}},$$ where $C$ is a constant
depending on $n$, $T_0$ and $\sup_{(x,t)\in M\times[0,T_0]}|A|$, $K_1$,
$K_2$ and $i_N$.
\end{proof}

Now we give a sufficient condition that assures the extension of the
mean curvature flow of submanifolds in a Riemannian manifold.
\begin{theorem}\label{extension}
Let $F_t:M^n\rightarrow N^{n+d}$ $(n\geq3)$ be the mean curvature
flow solution of closed submanifolds in a finite time interval $[0,T)$. If\\
(i) there exist positive constants $a$ and $b$ such that $|A|^2\leq a|H|^2+b$ for $t\in [0,T)$,\\
(ii) $||H||_{\alpha,M\times[0,T)}=\left(\int_0^T\int_{M_t}|H|^\alpha
d\mu_tdt\right)^{\frac{1}{\alpha}}<\infty$ for some
$\alpha\geq n+2$,\\
then this flow can be extended over time $T$.
\end{theorem}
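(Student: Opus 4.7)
My plan is to argue by contradiction via a parabolic blow-up, using Proposition~3.2 applied to rescaled flows together with the scale-invariance of the spacetime $L^{n+2}$-norm of $H$. By the standard continuation criterion for mean curvature flow in a Riemannian ambient with bounded geometry (a bound on $|A|$ yields, via Ecker--Huisken type estimates, uniform bounds on all higher derivatives of $A$, hence short-time extension), it suffices to show $\sup_{M\times[0,T)}|A|<\infty$, and by the pinching hypothesis $|A|^2\le a|H|^2+b$ this reduces further to $\sup_{M\times[0,T)}|H|<\infty$. As a preliminary observation, since $\mathrm{Vol}(M_t)$ is non-increasing along the flow, the spacetime $M\times[0,T)$ has finite measure; hence H\"older reduces the hypothesis to the case $\alpha=n+2$, and absolute continuity gives $\|H\|_{n+2,M\times[t',T)}\to 0$ as $t'\uparrow T$.

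Suppose for contradiction that $\sup|H|^2=\infty$. Choose $(x_k,t_k)$ with $t_k\uparrow T$ and
$$
Q_k:=|H|^2(x_k,t_k)=\max_{M\times[0,t_k]}|H|^2\to\infty.
$$
I would parabolically rescale: multiply the ambient metric by $Q_k$ and set $\tilde F_k(y,s):=F(y,t_k+s/Q_k)$ for $s\in[-Q_k t_k,0]$. After rescaling one has $|\tilde H|^2\le 1$ everywhere with $|\tilde H|^2(x_k,0)=1$; the pinching becomes $|\tilde A|^2\le a+b/Q_k\le a+1$ uniformly in $k$; and the bounded-geometry bounds of the rescaled ambient in fact improve, since the sectional curvatures scale by $Q_k^{-1}$ and the injectivity radius by $\sqrt{Q_k}$.

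Fix any $\tau_0>0$. For large $k$, $[-\tau_0,0]\subset[-Q_k t_k,0]$ and the rescaled flow exists on $[-\tau_0,0]$ with all the constants appearing in Proposition~3.2 --- namely $n$, $\tau_0$, $\sup|\tilde A|$, $\tilde K_1$, $\tilde K_2$, $\tilde i_N$ --- uniform in $k$. Applying Proposition~3.2 to $\tilde F_k$ yields
$$
1=|\tilde H|^2(x_k,0)\le \max_{M\times[-\tau_0/2,0]}|\tilde H|^2\le C\left(\int_{-\tau_0}^{0}\int_{M_s}|\tilde H|^{n+2}\,d\tilde\mu_s\,ds\right)^{2/(n+2)}.
$$
A direct scale check --- $H$ scales as $Q_k^{-1/2}$, $d\mu$ as $Q_k^{n/2}$, and $dt=Q_k^{-1}ds$, so the exponents sum to $-\tfrac{n+2}{2}+\tfrac{n}{2}+1=0$ --- shows that $|H|^{n+2}d\mu\,dt$ is invariant under the rescaling. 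Thus the right-hand side equals $C\|H\|_{n+2,M\times[t_k-\tau_0/Q_k,t_k]}^2$, which tends to $0$ as $k\to\infty$ by the absolute continuity noted above, contradicting the lower bound~$1$.

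The main technical obstacle is bookkeeping the rescaling so that (a) the hypotheses of Proposition~3.2 are met uniformly in $k$, and (b) the scale-invariant quantity on the right of the proposition is exactly the spacetime $L^{n+2}$-norm of $H$. The first is actually the easy part, since the bounded-geometry constants and $\sup|\tilde A|$ all improve under parabolic blow-up; the second is a short exponent count. The crucial role of the parabolic point-picking $Q_k=\max_{M\times[0,t_k]}|H|^2$ is that it delivers the uniform bound $|\tilde H|\le 1$ on the entire backwards rescaled spacetime, not merely near the chosen point, which is what legitimizes taking a fixed interval $[-\tau_0,0]$ in Proposition~3.2. The H\"older reduction and the $|A|$-based continuation criterion are standard and need only be invoked.
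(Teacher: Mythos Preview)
Your argument is correct and follows the same blow-up strategy as the paper: point-pick so that the rescaled flows have $|\tilde H|\le 1$, use the pinching to bound $|\tilde A|$ uniformly, and then combine the $L^\infty$--$L^{n+2}$ estimate of Proposition~\ref{ineq-integral} with the scale-invariance of $\|H\|_{n+2}$ in spacetime.

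The one noteworthy difference is that the paper, after applying Proposition~\ref{ineq-integral} to each rescaled flow, invokes the Chen--He compactness theorem to extract a limiting pointed mean curvature flow $\widetilde F:(\widetilde M,\widetilde g,\widetilde x)\to\mathbb{R}^{n+d}$, and then derives the contradiction on the limit (the limit has $|\widetilde H|(\widetilde x,1)=1$ yet $\max|\widetilde H|^2\le 0$). You bypass this entirely: since the constant $C$ in Proposition~\ref{ineq-integral} depends only on $n$, $\tau_0$, $\sup|\tilde A|$, and the ambient bounded-geometry constants---all of which are uniform in $k$---the inequality $1\le C\|H\|_{n+2,M\times[t_k-\tau_0/Q_k,t_k]}^2$ already yields a contradiction for large $k$ without ever passing to a limit. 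This is a genuine simplification; the compactness step is not needed for the argument. (A minor bookkeeping note: what you call Proposition~3.2 is Proposition~\ref{ineq-integral} in the paper's numbering.)
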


\begin{proof}By H\"{o}lder's inequality, it is sufficient to prove the theorem for $\alpha=n+2$. We will
argue by contradiction.

Suppose that the solution of the mean curvature flow can't be
extended over $T$. Then the second fundamental form becomes
unbounded as $t\rightarrow T$. From assumption $(i)$, $|H|^2$ is
unbounded either.

First we choose an increasing time sequence $t^{(i)}$,
$i=1,2,\cdots$, such that $t^{(i)}\rightarrow T$ as $i\rightarrow
\infty$. Then we take a sequence of points $x^{(i)}\in M$ satisfying
\begin{equation*}|H|^2(x^{(i)},t^{(i)})=\max_{(x,t)\in M\times [0,t^{(i)}]}
|H|^2(x,t).\end{equation*} Put $$Q^{(i)}=|H|^2(x^{(i)},t^{(i)}),$$
then $Q^{(i)},i=1,2,\cdots$ is a nondecreasing sequence and
$\lim_{i\rightarrow\infty}Q^{(i)}=\infty.$ This together with
$\lim_{i\rightarrow \infty}t^{(i)}=T>0$ implies that there exists a
positive integer $i_0$ such that $Q^{(i)}t^{(i)}\geq1$ and
$Q^{(i)}\geq 1$ for $i\geq i_0$. Let $h$ be the Riemannian metric on
$N$. For $i\geq i_0$ and $t\in [0,1]$, we consider the rescaled mean
curvature flows
\begin{equation*}F^{(i)}(t)=F\left(\frac{t-1}{Q^{(i)}}+t^{(i)}\right):(M,g^{(i)}(t))\rightarrow
(N,Q^{(i)}h),\end{equation*} where
$g^{(i)}(t)=F^{(i)}(t)^*(Q^{(i)}h)$. Let $H_{(i)}$ and
$A^{(i)}=h^{(i)}_{jk}$  be the mean curvature vector and the second
fundamental form of $F^{(i)}(t)$ respectively. Then we have
\begin{equation}\label{ineq-Hi}|H_{(i)}|^2(x,t)\leq 1 \ \ on\ \ M\times [0,1].\end{equation}

From assumption $(i)$ again, inequality (\ref{ineq-Hi}) implies
$|A^{(i)}|\leq C$, where $C$ is a constant independent of $i$. Since
$(N,h)$ has bounded geometry and $Q^{(i)}\geq1$ for $i\geq i_0$,
$(N,Q^{(i)}h)$ also has bounded geometry for $i\geq i_0$ with the
same bounding constants as $(N,h)$. It follows from Proposition
\ref{ineq-integral} that for $i\geq i_0$
$$\max_{(x,t)\in M^{(i)}\times[\frac{1}{2},1]}|H_{(i)}|^2(x,t)
\leq C\left(\int^{1}_0\int_{M_t}| H_{(i)}|^{n+2}d\mu_{g^{(i)}(t)}
dt\right)^{\frac{2}{n+2}},$$ where $C$ is a constant independent of
$i$.

By \cite{CH}, there is a subsequence of pointed mean curvature flow
solutions
\begin{equation*}F^{(i)}(t):(M,g^{(i)}(t),x^{(i)})\rightarrow
(N,Q^{(i)}h),\ t\in[0,1]\end{equation*} that converges to a pointed
mean curvature flow solution
\begin{equation*}\widetilde{F}(t):(\widetilde{M},\widetilde{g}(t),\widetilde{x})\rightarrow
{R}^{n+d},\ t\in[0,1].\end{equation*} Denote by $\widetilde{H}$ the
mean curvature vector of $\widetilde{F}$, $t\in [0,1]$. Then we have
\begin{equation}\label{ineq-maxH}
\begin{split}\max_{(x,t)\in
\widetilde{M}\times[\frac{1}{2},1]}|\widetilde{H}|^2(x,t)\leq
&\lim_{i\rightarrow \infty}C\left(\int^{1}_0\int_{M_t}|
H_{(i)}|^{n+2}d\mu_{g^{(i)}(t)} dt\right)^{\frac{2}{n+2}}\\
\leq&\lim_{i\rightarrow
\infty}C\left(\int^{t^{(i)}+(Q^{(i)})^{-1}}_{t^{(i)}}\int_{M_t}|
H_{(i)}|^{n+2}d\mu dt\right)^{\frac{2}{n+2}}\\
=&0.
\end{split}
\end{equation}
The equality in (\ref{ineq-maxH}) holds because $\int^T_0\int_M
|H|^{n+2} d\mu_t dt<+\infty$ and $(Q^{(i)})^{-1}\rightarrow0$ as
$i\rightarrow\infty$. On the other hand, according to the choice of
the points, we have
\begin{equation*}|\widetilde{H}|^2(\widetilde{x},1)=\lim_{i\rightarrow \infty}|H_{(i)}|^2(x^{(i)},1)=1.\end{equation*}
This is a contradiction.
\end{proof}
\begin{remark} When $d=1$, Theorem
\ref{extension} generalizes the theorems in \cite{LS,XYZ1,XYZ2}.
 In fact, for $N^{n+1}=R^{n+1}$, we have the following computations.

(i) If $h_{ij}\geq -C$ for $(x,t)\in M\times [0,T)$ with some
$C\geq0$, let $\lambda_i$, $i=1,\cdots,n$ be the principal
curvatures. Then $\lambda_i+C\geq0$, which implies that
\begin{equation*}\sum_i(\lambda_i+C)^2\leq n(\sum_i(\lambda_i+C))^2\leq
2nH^2+2n^3C^2. \end{equation*} On the other hand,
\begin{equation*}\sum_i(\lambda_i+C)^2=|A|^2+2CH+nC^2\geq|A|^2-H^2+(n-1)C^2.
\end{equation*} Hence $|A|^2\leq (2n+1)H^2+(2n^3-n+1)C^2$ for $t\in
[0,T)$.

(ii) If $H>0$ at $t=0$,  then there exists a positive constant $C$
such that $|A|^2\leq CH^2$ at $t=0$. By \cite{H1}, we know that
$H>0$ for $t>0$ and \begin{equation*}\frac{\partial}{\partial
t}\left(\frac{|A|^2}{H^2}\right)=\triangle\left(\frac{|A|^2}{H^2}\right)+\frac{2}{H}
\left\langle\nabla
H,\nabla\left(\frac{|A|^2}{H^2}\right)\right\rangle-\frac{2}{H^4}|H\nabla_ih_{jk}-\nabla_iH\cdot
h_{jk}|^2.\end{equation*} By the maximum principle we obtain that
$|A|^2/H^2$ is uniformly bounded from above by its initial data.
Hence $|A|^2\leq CH^2$  for $t\in [0,T)$.

For  general $N^{n+1}$ with bounded geometry, we have similar
computations. Hence our Theorem \ref{extension} is a generalization.
\end{remark}

At the end of this section, we would like to propose the following
\begin{problem} Let $F_t:M\rightarrow N$ be the mean curvature flow solution
of closed submanifolds in a finite time interval $[0,T)$. Suppose
$||H||_{\alpha,M\times[0,T)}<\infty$ for some $\alpha\geq n+2$. Is
there a positive constant $\omega$ such that the solution exists in
$[0,T+\omega)$?
\end{problem}

\section{The convergence of mean curvature flow}

In this section we obtain some convergence theorems for the mean
curvature flow. The extension theorem proved in section 3 will be used to give
 a positive lower bound on the existence time of the mean curvature flow.

We need the following Sobolev inequality for submanifolds in the
Euclidean space.
\begin{lemma}\label{Sobolev-ineq-integral} Let $M$ be an $n(\geq3)$-dimensional closed
submanifold in $\mathbb{R}^{n + d}$. Then for all Lipschitz
functions $v$ on $M$, we have
\begin{equation*}\bigg(\int_M v^{\frac{2n}{n-2}}d\mu\bigg)^{\frac{n-2}{n}}\leq
C_{n}\bigg(\int_M|\nabla v|^2d\mu+\int_{M}|H|^{n+2}d\mu \int_M
v^2d\mu\bigg)
\end{equation*}
where $C_n$ is a positive constant depending only on $n$.
\end{lemma}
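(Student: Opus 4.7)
The plan is to derive the stated inequality from the classical Michael--Simon Sobolev inequality on submanifolds of Euclidean space, namely $\|f\|_{n/(n-1)} \leq C_n \int_M (|\nabla f|+|H|f)\,d\mu$ for non-negative Lipschitz $f$; this is precisely the $b^2=0$ specialization of Lemma \ref{Sobo-ineq-HS} used earlier, and in the Euclidean case the side conditions on injective radius and volume are automatically satisfied. The target inequality differs from the standard curvature-corrected $L^2$ Sobolev inequality only in that the error term $\int_M |H|^2 v^2 \,d\mu$ has been replaced by the scale-invariant product $\int_M |H|^{n+2}\,d\mu \cdot \int_M v^2\,d\mu$, so the proof splits naturally into two stages: produce the $L^2$ Sobolev inequality with an $\int_M |H|^2 v^2$ error from Michael--Simon, then convert that error term into the required form via H\"older, interpolation and Young.

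For the first stage, I would substitute $f = v^{2(n-1)/(n-2)}$ into Michael--Simon and apply Cauchy--Schwarz to each of the two resulting integrals on the right, exactly as in the proof of Lemma \ref{Sobo-ineq}. Dividing out the common factor $(\int_M v^{2n/(n-2)}\,d\mu)^{1/2}$ and squaring yields
\[
\Bigl(\int_M v^{2n/(n-2)}\,d\mu\Bigr)^{(n-2)/n} \leq C\Bigl(\int_M |\nabla v|^2\,d\mu + \int_M |H|^2 v^2\,d\mu\Bigr).
\]

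For the second stage, I would estimate the error term by H\"older with conjugate exponents $(n+2)/2$ and $(n+2)/n$, then interpolate the resulting $L^{2(n+2)/n}$ norm of $v$ between $L^2$ and $L^{2n/(n-2)}$. A short exponent calculation gives interpolation parameter $\theta = n/(n+2)$, so
\[
\int_M |H|^2 v^2\,d\mu \leq \Bigl(\int_M |H|^{n+2}\,d\mu\Bigr)^{2/(n+2)} \|v\|_2^{4/(n+2)} \|v\|_{2n/(n-2)}^{2n/(n+2)}.
\]
The factor $\|v\|_{2n/(n-2)}^{2n/(n+2)}$ is strictly subcritical relative to the left-hand side $\|v\|_{2n/(n-2)}^2$, so Young's inequality with the conjugate pair $((n+2)/n,(n+2)/2)$ splits the bound as $\varepsilon \|v\|_{2n/(n-2)}^2 + C_\varepsilon \int_M |H|^{n+2}\,d\mu \int_M v^2\,d\mu$. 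Choosing $\varepsilon$ small enough absorbs the first piece into the left-hand side of the stage-one inequality and leaves exactly the desired bound. The only point where any care is needed is checking that the interpolation exponent $\theta = n/(n+2)$ and the Young pair indeed fit together so that the absorption is clean; this is a purely algebraic verification and presents no real obstacle.
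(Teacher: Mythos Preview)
Your proposal is correct and complete. The paper itself does not give an independent proof but simply refers to Le--\v{S}e\v{s}um \cite{LS} for the codimension-one case and asserts that the same method works in higher codimension; your two-stage argument (Michael--Simon $\to$ $L^2$ Sobolev with an $\int_M|H|^2v^2$ error $\to$ H\"older plus interpolation plus Young to reach the $\int_M|H|^{n+2}\,d\mu\cdot\int_Mv^2\,d\mu$ form) is precisely that standard method, and all exponents check out.
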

\begin{proof}The proof of the lemma   for $d=1$ was given in
\cite{LS}. Using the same method we can prove the lemma for $d>1$.
\end{proof}

Now we begin to prove the following convergence theorem for the mean
curvature flow.

\begin{theorem}\label{convergence}
Let $F_0:M^n\rightarrow R^{n+d}$ $(n\geq3)$ be a smooth closed
submanifold.  Then for any fixed $p>1$, there is a positive constant
$C_1$ depending on $n,p, Vol(M_0)$ and $||A||_{n+2}$, such that if
\begin{equation*}||\mathring{A}||_{p}<C_1,\end{equation*}
  then the mean curvature flow with $F_0$
as initial value has a unique solution $F:M\times[0,T)\rightarrow
R^{n+d}$ in a finite maximal time interval, and $F_t$ converges
uniformly to a point $x\in R^{n+d}$ as $t\rightarrow T$. The
rescaled maps $\widetilde{F}_t=\frac{F_t-x}{\sqrt{2n(T-t)}}$
converge in $C^{\infty}$ to a limiting embedding $\widetilde{F}_T$
such that $\widetilde{F}_T(M)$ is the unit $n$-sphere in some
$(n+1)$-dimensional subspace of $R^{n+d}$.
\end{theorem}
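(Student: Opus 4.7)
The strategy is to combine an $L^p$-smallness propagation argument for $\mathring A$ with a parabolic Moser iteration to upgrade it to $L^\infty$, and then trigger the convergence theorem of Andrews-Baker \cite{Andrews-Baker}. By standard parabolic theory the flow has a unique smooth solution on some maximal interval $[0,T)$, so the task reduces to showing that at some early time the pointwise pinching used in \cite{Andrews-Baker} is satisfied; the extension theorem of Section~3 then ensures $T < \infty$ and prevents premature singularity formation.

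For the first step I would specialize Lemma~\ref{evo-equ} to $N=\mathbb{R}^{n+d}$ and deduce an evolution inequality of the form
$$\frac{\partial}{\partial t}|\mathring A|^2 \le \Delta|\mathring A|^2 - 2|\widetilde\nabla\mathring A|^2 + C(n)|A|^2|\mathring A|^2.$$
Multiplying by $|\mathring A|^{p-2}$, integrating, and using the volume evolution $\partial_t d\mu_t = -|H|^2 d\mu_t$ from Lemma~\ref{evo-equ}, the reaction term $\int|A|^2|\mathring A|^p\,d\mu_t$ is estimated by H\"older combined with the Euclidean Sobolev inequality of Lemma~\ref{Sobolev-ineq-integral}, whose right-hand side brings in $\int|H|^{n+2}d\mu_t \le n^{(n+2)/2}\int|A|^{n+2}d\mu_t$. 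This produces a differential inequality
$$\frac{d}{dt}||\mathring A||_p^p \le C\bigl(n,p,||A||_{n+2}\bigr)||\mathring A||_p^p,$$
which, combined with a Gronwall-type argument showing $||A||_{n+2}(t)$ remains comparable to $||A||_{n+2}(0)$ for short times, yields $||\mathring A||_p(t)\le 2||\mathring A||_p(0)$ on a uniform interval $[0,t_0]$ whose length depends only on $n, p, \mathrm{Vol}(M)$, and $||A||_{n+2}$.

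The second step is a parabolic Moser iteration applied to $|\mathring A|^{2q}$, $q\ge 1$, formally identical to the iteration in the proof of Proposition~\ref{ineq-integral} but with $|\mathring A|$ in place of $|H|$ and using Lemma~\ref{Sobolev-ineq-integral} as the Sobolev input. This converts the $L^p$-smallness of $\mathring A$ into the pointwise estimate
$$\sup_{M\times[t_0/2,t_0]}|\mathring A| \le C\bigl(n,p,\mathrm{Vol}(M),||A||_{n+2}\bigr)\,||\mathring A||_p(0)^{\theta}$$
for some $\theta=\theta(n,p)>0$. Choosing $C_1$ small enough makes the right-hand side as small as desired. Since a closed submanifold of $\mathbb{R}^{n+d}$ cannot be minimal, $|H|$ attains a strictly positive maximum on $M_{t_0/2}$; combined with the Moser estimate this forces the pointwise Andrews-Baker pinching $|A|^2 \le c_n|H|^2 + \varepsilon$ on $M_{t_0/2}$, for a suitable constant $c_n$ (explicitly $\tfrac{4}{3n}$ for $n\ge 4$ and an analogous value for $n=3$). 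Restarting the flow at time $t_0/2$ and invoking \cite{Andrews-Baker} yields finite maximal existence time and $C^\infty$ convergence of the rescaled embeddings to a unit $n$-sphere inside some $(n+1)$-dimensional affine subspace.

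The main obstacle is the first step. The cubic reaction $|A|^2|\mathring A|^2$ is exactly critical for the Sobolev exponent, so a careful Gagliardo-Nirenberg interpolation is needed to absorb it into $|\widetilde\nabla\mathring A|^2$ at the cost of a multiplicative constant controlled by $||A||_{n+2}$; this is precisely where the dependence of $C_1$ on $||A||_{n+2}$ originates. A secondary subtlety is the codimension $d\ge 2$ normal-curvature term $\sum_{\alpha,\beta}\langle \mathring A^\alpha, \mathring A^\beta\rangle^2$ appearing in the evolution of $|\mathring A|^2$, which must be bounded using the same tensor estimates that underlie the Andrews-Baker pinching preservation and which carry the proof beyond the hypersurface case of \cite{LS,XYZ1}.
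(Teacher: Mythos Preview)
Your first two steps---propagating the $L^p$ bound on $\mathring A$ for a definite time and then upgrading it to $L^\infty$ via Moser iteration---are essentially what the paper does. The genuine gap is in the sentence where you invoke Andrews--Baker. Observing that $|H|$ has a strictly positive \emph{maximum} on $M_{t_0/2}$ is true but not enough: the Andrews--Baker pinching $|A|^2\le c_n|H|^2$ must hold at \emph{every} point, and since $|A|^2=|\mathring A|^2+\tfrac1n|H|^2$ this forces $|H|^2\ge (c_n-\tfrac1n)^{-1}|\mathring A|^2$ everywhere. Nothing in your argument prevents $|H|$ from vanishing (or being very small) on part of $M_{t_0/2}$ while $|\mathring A|$ remains of order $\varepsilon$ there, and then the pinching fails no matter how small $\varepsilon$ is.

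The paper spends its entire Step~2 on this point, producing a \emph{lower} bound for $|H|_{\min}$ at some time $T_3\in[T_0/2,T_0]$ from three ingredients you omit: (i) the total mean curvature inequality $n^n\omega_n\le\int_{M_t}|H|^n\,d\mu_t$, which with $\mathrm{Vol}(M_t)\le V$ gives a uniform lower bound on $|H|_{\max}$; (ii) Topping's diameter estimate $\mathrm{diam}(M_t)\le c\int_{M_t}|H|^{n-1}\,d\mu_t$; and (iii) a gradient bound $|\nabla H|\le C\varepsilon$, obtained from the maximum principle applied to $G=(t-\tfrac{T_0}{2})|\nabla\mathring A|^2+|\mathring A|^2$ (using a separate Moser bound on $|A|^2$ and the inequality $|\nabla H|^2\le\tfrac{3n^2}{2(n-1)}|\nabla\mathring A|^2$). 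Integrating $\nabla|H|^2$ along a shortest geodesic then gives $|H|^2_{\min}\ge |H|^2_{\max}-C\varepsilon\cdot\mathrm{diam}$, which is positive once $\varepsilon$ is small enough. This is exactly where the essential dependence of $C_1$ on $\mathrm{Vol}(M_0)$ enters. (A side remark: finiteness of $T_{\max}$ is elementary and does not use the extension theorem; the latter is invoked inside Step~1 to rule out blow-up before the integral norms can double.)
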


\begin{proof}
We set $\Lambda=||A||_{n+2}$. Denote by $T_{\max}$ the maximal
existence time of the mean curvature flow with $F_0$ as initial
value. It is easy to show that $T_{\max}<+\infty$ (see
\cite{WaM3} for a proof).

We split the proof to several steps.

\textbf{Step 1.} For any fixed positive number $\varepsilon$, we
first show that if
\begin{equation}
||\mathring{A}||_p<\varepsilon
\end{equation} for some $p>1$, then  $T_{\max}$  satisfies
$T_{\max}>T_0$ for some positive constant $T_0$ depending  on $n,\
p$, $\Lambda$ and independent of $\varepsilon$, and there hold
$||A(t)||_{n+2}<2\Lambda,$ $||\mathring{A}(t)||_p<2\varepsilon$ for
$t\in[0,T_0]$ .

 Put
\begin{equation*}T=\sup\{t\in [0,T_{\max}):
||A(t)||_{n+2}<2\Lambda, ||\mathring{A}(t)||_p<2\varepsilon\}.
\end{equation*}
We consider the mean curvature flow on the time interval $[0,T)$.

By the definition of $T$ we have $\int_{M_t}|A|^{n+2}d\mu_t\leq
(2\Lambda)^{n+2}$ for $t\in [0,T)$. From Lemma
\ref{Sobolev-ineq-integral} we have for a Lipschitz function $v$,
\begin{equation}\label{sobo-Mt}\bigg(\int_M v^{\frac{2n}{n-2}}d\mu\bigg)^{\frac{n-2}{n}}\leq
C_{n}\bigg(\int_M|\nabla
v|^2d\mu+n^{\frac{n+2}{2}}(2\Lambda)^{n+2}\int_M v^2d\mu\bigg).
\end{equation}

From (\ref{evo-A}), we  have
\begin{equation*}
\frac{\partial}{\partial t} |A|^2\leq \Delta |A|^2 +c_1|A|^4,
\end{equation*} for some positive constant $c_1$ depending only on
$n$.
Putting $u=|A|^2$, we have
\begin{eqnarray}\label{evo-f}
\frac{\partial}{\partial t}u\leq \Delta u+c_1u^2.
\end{eqnarray}
 From (\ref{evo-f})
and (\ref{evo-d mu}) we have
\begin{equation}\label{evo-int-f^p}
\begin{split}
\frac{\partial}{\partial t}\int_{M_t} u^{\frac{n+2}{2}}d\mu_t
=&\int_{M_t}\frac{n+2}{2}u^{\frac{n+2}{2}-1}\frac{\partial}{\partial
t}ud\mu_t
+\int_{M_t} u^{\frac{n+2}{2}} \frac{\partial}{\partial t}d\mu_t\\
=& \frac{n+2}{2}\int_{M_t} u^{\frac{n+2}{2}-1}(\Delta
u+cu^2)d\mu_t-\int_{M_t}H^2u^{\frac{n+2}{2}}d\mu_t\\
\leq&-\frac{4n}{n+2}\int_{M_t}|\nabla
u^{\frac{n+2}{4}}|^2d\mu_t+\frac{n+2}{2}c_1\int_{M_t}
u^{\frac{n+2}{2}+1}d\mu_t.
\end{split}
\end{equation}
For the second term of the right hand side of (\ref{evo-int-f^p}),
we have by H\"{o}lder's inequality

\begin{equation}\label{ineq-fp+1-1}
\begin{split}
\int_{M_t} u^{\frac{n+2}{2}+1}d\mu_t \leq&
\bigg(\int_{M_t}u^{\frac{n+2}{2}}d\mu_t\bigg)^{{\frac{2}{n+2}}}\cdot
\bigg(\int_{M_t}(u^{\frac{n+2}{2}})^{{\frac{n+2}{n}}}d\mu_t\bigg)^{{\frac{n}{n+2}}}\\
\leq&
\bigg(\int_{M_t}u^{\frac{n+2}{2}}d\mu_t\bigg)^{{\frac{2}{n+2}}}\cdot
\bigg(\int_{M_t}u^{\frac{n+2}{2}}d\mu_t\bigg)^{\frac{2}{n+2}}\cdot
\bigg(\int_{M_t}(u^{\frac{n+2}{2}})^{\frac{2n}{n-2}}d\mu_t\bigg)^{\frac{n-2}{n+2}}\\
\leq&
\bigg(\int_{M_t}u^{\frac{n+2}{2}}d\mu_t\bigg)^{{\frac{2}{n+2}}}\cdot
\bigg(\int_{M_t}u^{\frac{n+2}{2}}d\mu_t\bigg)^{\frac{2}{n+2}}\\
&\times\bigg[C_n\bigg(\int_{M_t}|\nabla
u^{\frac{n+2}{4}}|^2d\mu_t+\int_{M_t}|H|^{n+2}d\mu_t
\int_{M_t}u^\frac{n+2}{2}d\mu_t\bigg)\bigg]^{\frac{n}{n+2}}\\
\leq&
\bigg(\int_{M_t}u^{\frac{n+2}{2}}d\mu_t\bigg)^{{\frac{4}{n+2}}}
\cdot\bigg[C_n^{\frac{n}{n+2}}\bigg(\int_{M_t}|\nabla
u^{\frac{n+2}{4}}|^2d\mu_t\bigg)^{\frac{n}{n+2}}\\
&+n^{\frac{n}{2}}(2\Lambda)^nC_n^{\frac{n}{n+2}}\bigg(\int_{M_t}u^{\frac{n+2}{2}}d\mu_t\bigg)^{\frac{2n}{n+2}}
\bigg]\\
\leq&n^{\frac{n}{2}}(2\Lambda)^nC_n^{\frac{n}{n+2}}\bigg(\int_{M_t}u^{\frac{n+2}{2}}d\mu_t\bigg)^{2}+C_n^{\frac{n}{n+2}}\cdot
\frac{2}{n+2}\epsilon^{\frac{n+2}{2}}\bigg(\int_{M_t}u^{\frac{n+2}{2}}d\mu_t\bigg)^{2}\\
&+C_n^{\frac{n}{n+2}}\cdot\frac{n}{n+2}\epsilon^{-\frac{n+2}{n}}\int_{M_t}|\nabla
u^{\frac{n+2}{4}}|^2d\mu_t,
\end{split}
\end{equation}
for any $\epsilon>0$. Combining (\ref{evo-int-f^p}) and
(\ref{ineq-fp+1-1}), we have
\begin{equation}\label{ineq-fp}
\begin{split}
\frac{\partial}{\partial t}\int_{M_t} u^{\frac{n+2}{2}}d\mu_t \leq&
\frac{n+2}{2}c_1\Big(n^{\frac{n}{2}}(2\Lambda)^nC_n^{\frac{n}{n+2}}+C_n^{\frac{n}{n+2}}\cdot
\frac{2}{n+2}\epsilon^{\frac{n+2}{2}}\Big)\bigg(\int_{M_t}u^{\frac{n+2}{2}}d\mu_t\bigg)^{2}\\
&+\Big(\frac{n}{2}c_1C_n^{\frac{n}{n+2}}\epsilon^{-\frac{n+2}{n}}-\frac{4n}{n+2}\Big)\int_{M_t}|\nabla
u^{\frac{n+2}{4}}|^2d\mu_t.
\end{split}
\end{equation}
Picking
$\epsilon=\bigg(\frac{n(n+2)c_1C_n^{\frac{n}{n+2}}}{8}\bigg)^{\frac{n}{n+2}}$,
inequality (\ref{ineq-fp}) reduces to
\begin{equation}\label{ineq-fp'}
\begin{split}
\frac{\partial}{\partial t}\int_{M_t} |A|^{{n+2}}d\mu_t \leq
c_2\bigg(\int_{M_t}|A|^{n+2}d\mu_t\bigg)^{2},
\end{split}
\end{equation}
where
$c_2=\frac{n+2}{2}c_1\bigg(n^{\frac{n}{2}}(2\Lambda)^nC_n^{\frac{n}{n+2}}+C_n^{\frac{n}{n+2}}\cdot
\frac{2}{n+2}\bigg(\frac{n(n+2)c_1C_n^{\frac{n}{n+2}}}{8}\bigg)^{\frac{n}{2}}\bigg)$.

From (\ref{ineq-fp'}), we see by the maximal principle that, for
$t\in [0,\min\{T,T_1\})$, where
$T_1=\frac{1-(\frac{2}{3})^{n+2}}{c_2 \Lambda^{n+2}}$, there holds
\begin{equation}\label{2222}
||A(t)||_{n+2}<\frac{3}{2}\Lambda.
\end{equation}

Now we consider the evolution equation of $|\mathring{A}|^2$. By a simple
computation, we have
\begin{equation}\label{evo-phi2} \begin{split}
\frac{\partial}{\partial t}|\mathring{A}|^2\leq&\Delta |\mathring{A}|^2
-2|\nabla\mathring{A}|^2+c_3|A|^2|\mathring{A}|^2,
\end{split}\end{equation}
where $c_3\geq c_1$ is a positive constant depending only on $n$.

 Define a
tensor $\tilde{\mathring{A}}$ by
$\tilde{\mathring{A}}^{\alpha}_{ij}=\mathring{A}^{\alpha}_{ij}+\sigma\eta^\alpha
\delta_{ij}$, where $\eta^\alpha=1$. Set
$h_\sigma=|\tilde{\mathring{A}}|=(|\mathring{A}|^2+nd\sigma^2)^{\frac{1}{2}}$.
Then from (\ref{evo-phi2})  we have
\begin{equation}\label{evo-h}
\frac{\partial}{\partial t}h_\sigma\leq\Delta h_\sigma
+c_3|A|^2h_\sigma.
\end{equation}

For any $r\geq p>1$, we have
\begin{equation}
\begin{split}\label{ineq-h}
\frac{1}{r}\frac{\partial}{\partial t}\int_{M_t} h_\sigma^{r}d\mu_t
=&\int_{M_t}h_\sigma^{r-1}\frac{\partial}{\partial t}h_\sigma d\mu_t
+\frac{1}{r}\int_{M_t} h_\sigma^{p} \frac{\partial}{\partial t}d\mu_t\\
\leq&-\frac{4(r-1)}{r^2}\int_{M_t}|\nabla h_\sigma
^{\frac{r}{2}}|^2d\mu_t+c_3\int_{M_t}|A|^2h_\sigma^rd\mu_t.
\end{split}
\end{equation}

For the second term of the right hand side of (\ref{ineq-h}), we
have the following estimate.

\begin{equation}
\begin{split}\label{second-term}
\int_{M_t}|A|^2h_\sigma^rd\mu_t
\leq&\bigg(\int_{M_t}|A|^{n+2}d\mu_t\bigg)^{\frac{2}{n+2}}\cdot\bigg(\int_{M_t}h_\sigma^{r\cdot
\frac{n+2}{n}}d\mu_t\bigg)^{\frac{n}{n+2}}\\
\leq&(2\Lambda)^{2}\bigg(\int_{M_t}h_\sigma^r
d\mu_t\bigg)^{\frac{2}{n+2}}
\cdot\bigg(\int_{M_t}(h_\sigma^r)^{\frac{n}{n-2}}
d\mu_t\bigg)^{\frac{n-2}{n}\cdot \frac{n}{n+2}}\\
\leq &(2\Lambda)^{2}\bigg(\int_{M_t}h_\sigma^r
d\mu_t\bigg)^{\frac{2}{n+2}}\cdot\bigg[C_{n}\bigg(\int_M|\nabla
h_\sigma^{\frac{r}{2}}|^2d\mu_t\\
&+n^{\frac{n+2}{2}}(2\Lambda)^{n+2}\int_M
h_\sigma^rd\mu_t\bigg)\bigg]^{\frac{n}{n+2}}\\
\leq&(2\Lambda)^{2}\bigg(\int_{M_t}h_\sigma^r
d\mu_t\bigg)^{\frac{2}{n+2}}\cdot\bigg[C_{n}^{\frac{n}{n+2}}\bigg(\int_M|\nabla
h_\sigma^{\frac{r}{2}}|^2d\mu_t\bigg)^{\frac{n}{n+2}}\\
&+n^{\frac{n}{2}}(2\Lambda)^{n}C_{n}^{\frac{n}{n+2}}\bigg(\int_M
h_\sigma^rd\mu_t\bigg)^{\frac{n}{n+2}}\bigg]\\
=&n^{\frac{n}{2}}(2\Lambda)^{n+2}C_{n}^{\frac{n}{n+2}}\int_M
h_\sigma^rd\mu_t\\
&+(2\Lambda)^{2}C_{n}^{\frac{n}{n+2}}\bigg(\int_{M_t}h_\sigma^r
d\mu_t\bigg)^{\frac{2}{n+2}}\cdot\bigg(\int_M|\nabla
h_\sigma^{\frac{r}{2}}|^2d\mu_t\bigg)^{\frac{n}{n+2}}\\
\leq&n^{\frac{n}{2}}(2\Lambda)^{n+2}C_{n}^{\frac{n}{n+2}}\int_M
h_\sigma^rd\mu_t\\
&+(2\Lambda)^{2}C_{n}^{\frac{n}{n+2}}\cdot\frac{2}{n+2}\mu^{\frac{n+2}{2}}\int_{M_t}h_\sigma^r
d\mu_t\\
&+(2\Lambda)^{2}C_{n}^{\frac{n}{n+2}}\cdot\frac{n}{n+2}\mu^{-\frac{n+2}{n}}\int_M|\nabla
h_\sigma^{\frac{r}{2}}|^2d\mu_t,
\end{split}
\end{equation}
for any $\mu>0$. Therefore, combining (\ref{ineq-h}) and
(\ref{second-term}) we have

\begin{equation}
\begin{split}\label{ineq-h1}
\frac{1}{r}\frac{\partial}{\partial t}\int_{M_t} h_\sigma^{r}d\mu_t
\leq&\bigg(c_3(2\Lambda)^{2}C_{n}^{\frac{n}{n+2}}\cdot\frac{n}{n+2}\mu^{-\frac{n+2}{n}}-\frac{4(r-1)}{r^2}\bigg)\int_{M_t}|\nabla
h_\sigma
^{\frac{r}{2}}|^2d\mu_t\\
&+c_3\bigg(n^{\frac{n}{2}}(2\Lambda)^{n+2}C_{n}^{\frac{n}{n+2}}+(2\Lambda)^{2}C_{n}^{\frac{n}{n+2}}\cdot\frac{2}{n+2}\mu^{\frac{n+2}{2}}\bigg)\int_M
h_\sigma^rd\mu_t.
\end{split}
\end{equation}
Choose $\mu=\Big(\frac{c_4r^2p}{3rp-4p+r}\Big)^{\frac{n}{n+2}}$,
where
$c_4=c_3(2\Lambda)^{2}C_{n}^{\frac{n}{n+2}}\cdot\frac{n}{n+2}$.
Then from (\ref{ineq-h1}), we have
\begin{equation}\label{ineq-h22}
\frac{\partial}{\partial t}\int_{M_t}
h_\sigma^{r}d\mu_t+\Big(1-\frac{1}{p}\Big)\int_{M_t}|\nabla h_\sigma
^{\frac{r}{2}}|^2d\mu_t\leq\bigg(c_5+c_6\Big(\frac{r^2p}{3rp-4p+r}\Big)^{\frac{n}{2}}\bigg)\cdot
r\cdot\int_M h_\sigma^rd\mu_t,
\end{equation}
where $c_5=c_3(2\Lambda)^2C_{n}^{\frac{n}{n+2}}$ and
$c_6=c_3n^{\frac{n}{2}}(2\Lambda)^{n+2}C_{n}^{\frac{n}{n+2}}\cdot\frac{2}{n+2}\cdot
c_4^{\frac{n}{2}}$.

Let $r=p$. Then (\ref{ineq-h2}) reduces to
\begin{equation}\label{ineq-h3}
\frac{\partial}{\partial t}\int_{M_t} h_\sigma^{p}d\mu_t\leq
c_7\int_M h_\sigma^pd\mu_t,
\end{equation}\
where
$c_7=\bigg(c_5+c_6\Big(\frac{p^2}{3p-3}\Big)^{\frac{n}{2}}\bigg)\cdot
p$.

Letting $\sigma\rightarrow0$,  (\ref{ineq-h3}) becomes
\begin{equation*}
\frac{\partial}{\partial t}\int_{M_t} |\mathring{A}|^{p}d\mu_t\leq
c_7\int_M |\mathring{A}|^pd\mu_t.
\end{equation*}
This implies by the maximal principle that, for $t\in
[0,\min\{T,T_2\})$, where $T_2=\frac{(n+2)\ln\frac{3}{2}}{c_7}$,
there holds
\begin{equation}\label{3333}
||\mathring{A}(t)||_p< \frac{3}{2}\varepsilon.
\end{equation}

Set $T_0=\min\{T_1,T_2\}.$   We claim that $T>T_0$. We prove this
claim by contradiction. Suppose that  $T\leq T_0$. Then (\ref{2222})
and (\ref{3333}) hold on $[0,T)$.

If $T<T_{\max}$,  from the smoothness of the mean curvature flow we
see that there exists a positive constant $\vartheta$ such that on
$[0,T+\vartheta)$ we have
\begin{equation*}
||A(t)||_{n+2}<\frac{5}{3}\Lambda,\ \ ||\mathring{A}(t)||_p<
\frac{5}{3}\varepsilon.
\end{equation*}
This contradicts to the definition of $T$.

If $T=T_{\max}$, we will show that the mean curvature flow can be
extended over time $T_{\max}$.

From  (\ref{ineq-h22}), we have

\begin{equation}\label{ineq-h2}
\frac{\partial}{\partial t}\int_{M_t}
h_\sigma^{r}d\mu_t+\Big(1-\frac{1}{p}\Big)\int_{M_t}|\nabla h_\sigma
^{\frac{r}{2}}|^2d\mu_t\leq c_8r^{n+1}\cdot\int_M h_\sigma^rd\mu_t,
\end{equation}
where
$c_8=\max\{\frac{c_5}{p^n},\frac{c_6}{(3p-3)^{\frac{n}{2}}}\}$.

 As  in the proof of Proposition
\ref{ineq-integral}, for any $\tau,\tau'$ such that
$0<\tau<\tau'<T_{\max}-\theta$, and  for any $t\in
[\tau',T_{\max}-\theta]$,  where $\theta$ is a small positive
constant,  we have from (\ref{ineq-h2})
\begin{equation}\label{ineq-h4}
\int_{M_t}
h_\sigma^{r}d\mu_t+\Big(1-\frac{1}{p}\Big)\int_{\tau'}^t\int_{M_t}|\nabla
h_\sigma ^{\frac{r}{2}}|^2d\mu_tdt \leq
\Big(c_8r^{n+1}+\frac{1}{\tau'-\tau}\Big)\int_{\tau}^{T_{\max}-\theta}\int_{M_t}
h_\sigma^rd\mu_tdt.
\end{equation}
 As in (\ref{ineq-6}),  we have by (\ref{sobo-Mt})
\begin{equation}\label{ineq-h5}
\begin{split}
&\int^{T_{\max}-\theta}_{\tau'}\int_{M_t}
h_\sigma^{(1+\frac{2}{n})}d\mu_tdt\\
\leq&\int^{T_{\max}-\theta}_{\tau'}\left(\int_{M_t}
h_\sigma^{r}d\mu_t\right)^{\frac{2}{n}}\cdot\left(\int_{M_t}
h_\sigma^{\frac{nr}{n-2}}d\mu_t
\right)^{\frac{n-2}{n}}dt\\
\leq&\max_{t\in [\tau',T_{max}-\theta]}\left(\int_{M_t}
h_\sigma^{r}d\mu_t\right)^{\frac{2}{n}}\cdot\int^{T_{\max}-\theta}_{\tau'}\left(\int_{M_t}
h_\sigma^{\frac{nr}{n-2}}d\mu_t \right)^{\frac{n-2}{n}}dt\\
\leq&C_n^{\frac{n-2}{n}}\cdot\max_{t\in
[\tau',T_{max}-\theta]}\left(\int_{M_t}
h_\sigma^{r}d\mu_t\right)^{\frac{2}{n}}\\
&\times \int^{T_{\max}-\theta}_{\tau'}\left(\int_{M_t}|\nabla
h_\sigma^{\frac{r}{2}}|^2d\mu_t+n^{\frac{n+2}{2}}(2\Lambda)^{n+2}\int_{M_t}
h_\sigma^rd\mu_t\right)dt.
\end{split}\end{equation}
From (\ref{ineq-h4}) and (\ref{ineq-h5}), we have
\begin{equation}\label{ineq-h6}
\begin{split}
\int^{T_{\max}-\theta}_{\tau'}\int_{M_t}
h_\sigma^{r(1+\frac{2}{n})}d\mu_tdt \leq&c_9\bigg(c_8
r^{n+1}+\frac{1}{\tau'-\tau}\bigg)^{1+\frac{2}{n}}\\
&\times
\bigg(\int^{T_{\max}-\theta}_{\tau}\int_{M_t}h_\sigma^rd\mu_tdt\bigg)^{1+\frac{2}{n}},
\end{split}\end{equation}
where $c_9=C_n^{\frac{n-2}{n}}\cdot
\max\{1,n^{\frac{n+2}{2}}(2\Lambda)^{n+2}T_0\cdot\frac{p}{p-1}\}$.

We put
\begin{equation*}J(r,t)=\int_t^{T_{\max}-\theta}\int_{M_t}h_\sigma^rd\mu_tdt.
\end{equation*}
Then from (\ref{ineq-h6}) we have
\begin{equation}\label{ineq-h7}J\Big(r\Big(1+\frac{2}{n}\Big),\tau'\Big)\leq c_9\bigg(c_8
r^{n+1}+\frac{1}{\tau'-\tau}\bigg)^{1+\frac{2}{n}}J(r,\tau)^{1+\frac{2}{n}}.
\end{equation}
We let \begin{equation*}\mu=1+\frac{2}{n},\ \ r_k=p\mu^k,\ \
\tau_k=\bigg(1-\frac{1}{\mu^{k+1}}\bigg)t.\end{equation*} Notice
that $\mu>1$. From (\ref{ineq-h7}) we have
\begin{equation*}
\begin{split}J(r_{k+1},\tau_{k+1})^{\frac{1}{r_{k+1}}} \leq
c_9^{\frac{1}{r_{k+1}}}\bigg(c_8p^{n+1} +\frac{\mu^2}{\mu-1}\cdot
\frac{1}{t}\bigg)^{\frac{1}{r_k}}\mu^{\frac{k}{r_k}\cdot
(n+1)}J(r_k,\tau_k)^{\frac{1}{r_{k}}}.
\end{split}\end{equation*}
Hence
\begin{equation*}
\begin{split}J(r_{m+1},\tau_{m+1})^{\frac{1}{r_{m+1}}} \leq&
c_9^{\sum_{k=0}^m\frac{1}{r_{k+1}}}\bigg(c_8p^{n+1}
+\frac{\mu^2}{\mu-1}\cdot
\frac{1}{t}\bigg)^{\sum_{k=0}^m\frac{1}{r_k}}\\
&\cdot\mu^{(n+1)\cdot\sum_{k=0}^m\frac{k}{r_k}}J(p,t)^{\frac{1}{p}}.
\end{split}\end{equation*}
As $m\rightarrow+\infty$, we conclude that
\begin{equation}\label{improt-estimate}
h_\sigma(x,t)\leq\bigg(1+\frac{2}{n}\bigg)^{\frac{n(n+1)(n+2)}{4p}}
c_9^{\frac{n}{2p}}\bigg(c_8p^{n+1}+\frac{(n+2)^2}{2nt}
\bigg)^{\frac{n+2}{2p}}\bigg(\int_{0}^{T_{\max}-\theta}\int_{M_t}h_\sigma^{p}d\mu_tdt\bigg)^{\frac{1}{p}}.
\end{equation}

Now let $\sigma\rightarrow 0$ and  $\theta\rightarrow 0$. Then we
have for $t\in [\frac{T_{\max}}{2},T_{\max})$,
\begin{equation*}
|\mathring{A}|^2(x,t)\leq C(n,p,\Lambda,\varepsilon,T_{\max})<+\infty.
\end{equation*}
This implies that
\begin{equation*}|A|^2\leq a|H|^2+b
\end{equation*}
on $[0,T_{\max})$ for some positive constants $a$ and $b$
independent of $t$. On the other hand, we also have
\begin{equation*}\int_{0}^{T_{\max}}\int_{M_t}|H|^{n+2}d\mu_tdt<+\infty,\end{equation*}
since $T_{\max}<+\infty$. Now we apply Theorem \ref{extension} to
conclude  that the mean curvature flow can be extended over time
$T_{\max}$. This is a contradiction. This completes the proof of the
claim.

By the definition of $T$, for $t\in [0,T_0]$, we also have
\begin{equation}\label{11111}||A(t)||_{n+2}<2\Lambda, \ \ \ ||\mathring{A}(t)||_p<2\varepsilon.\end{equation}
This completes Step 1.

\textbf{Step 2.} We denote by $Vol(\Sigma)$ the volume of a
Riemannian manifold $\Sigma$, and set $V=Vol(M_0)$. In this step we
show that if we choose $\varepsilon$ sufficiently small, then at
some time $T_3\in [\frac{T_0}{2},T_0]$, the mean curvature is
bounded from bellow by a positive constant depending on $n$, $p$,
$V$ and $\Lambda$.

 Since the area of the submanifold is non-increasing along the mean curvature flow, we see that for $t\in
[0,T_{\max})$, there holds
\begin{equation}\label{volume}Vol(M_t)\leq
V.\end{equation}

Since $M_t$ is a closed submanifold in the Euclidean space, by the
total mean curvature inequality (for the proof see \cite{chen}), we
have
\begin{equation*}n^n\omega_n\leq\int_{M_t}|H|^nd\mu_t\leq |H|^n_{\max}(t)Vol(M_t)\leq|H|^n_{\max}(t)V.\end{equation*}
Here $|H|_{\max}(t)=\max_{M_t}|H|(\cdot,t)$. This implies that for
$t\in [0,T_{\max})$, there holds
\begin{equation}\label{H-max}|H|^2_{\max}(t)\geq n^n\omega_n V^{-1}:=c_{10}.\end{equation}

On the other hand, by \cite{Topping}, there is a positive constant
$c_{11}$ depending only on $n$ such that for $t\in [0,T_{\max})$, we
have
\begin{equation*}
diam(M_t)\leq c_{11}\int_{M_t}|H|^{n-1}d\mu_t,
\end{equation*}
where $diam(M_t)$ denotes the diameter of $M_t$. This together with
the H\"{o}lder inequality, (\ref{11111}) and (\ref{volume}) implies
that  for $t\in [0,T_{\max})$
\begin{equation}\label{diameter}
diam(M_t)\leq
c_{11}n^{\frac{n-1}{2}}(2\Lambda)^{n-1}V^{\frac{3}{n+2}}:=c_{12}.
\end{equation}

Since $T>T_0$, we  consider the mean curvature flow on
$[\frac{T_0}{2},T_0]$.

As (\ref{improt-estimate}), we have for $t\in [\frac{T_0}{2},T_0]$
\begin{equation}\label{|phi|^2}
|\mathring{A}|\leq\bigg(1+\frac{2}{n}\bigg)^{\frac{n(n+1)(n+2)}{4p}}
c_9^{\frac{n}{2p}}\bigg(c_8p^{n+1}+\frac{(n+2)^2}{nT_0}
\bigg)^{\frac{n+2}{2p}}\cdot T_0^{\frac{1}{p}}\cdot 2\varepsilon
:=c_{13}\varepsilon.
\end{equation}
Here $c_{13}$ depends on $n,p,V,\Lambda$ and is independent of
$\varepsilon$.

 For $u=|A|^2$, since $c_1\leq c_3$, we have by (\ref{evo-f})
\begin{equation}\label{evo-u}
\frac{\partial}{\partial t}u\leq \Delta u+c_3|A|^2u.
\end{equation}
Then by a standard Moser iteration process as for $h_\sigma$ in Step
1, we have for $t\in [\frac{T_0}{2},T_0]$
\begin{equation}\label{|A|^2}
|A|^2\leq\bigg(1+\frac{2}{n}\bigg)^{\frac{n(n+1)}{2}}
c_{15}^{\frac{n}{n+2}}\bigg(c_{14}\Big(\frac{n+2}{2}\Big)^{n+1}+\frac{(n+2)^2}{nT_0}
\bigg)\cdot T_0^{\frac{2}{n+2}}\cdot 2\Lambda:=c_{16}.
\end{equation}
Here
$c_{14}=\max\{\frac{c_52^n}{(n+2)^n},\frac{c_62^{\frac{n}{2}}}{(3n)^{\frac{n}{2}}}\}$,
and $c_{15}=C_n^{\frac{n-2}{n}}\cdot
\max\{1,n^{\frac{n+2}{2}}(2\Lambda)^{n+2}T_0\cdot\frac{n+2}{n}\}.$

Set
\begin{equation*}G=\Big(t-\frac{T_0}{2}\Big)|\nabla\mathring{A}|^2+|\mathring{A}|^2.
\end{equation*}
We consider the evolution inequality of $G$ on
$[\frac{T_0}{2},T_0]$.

As in \cite{Andrews-Baker}, we have
\begin{equation*}\nabla_t(\nabla \mathring{A})=\nabla(\nabla_t\mathring{A})+A\ast
A\ast\nabla A.
\end{equation*}
Here $\nabla$ is the connection on the spatial vector bundle, which
for each $t$ is agree with the Levi-Civita connection of $g(t)$. The
evolution equation of $\mathring{A}$ is
\begin{equation*}\nabla_t\mathring{A}=\triangle\mathring{A}+A\ast
A\ast A.
\end{equation*}
On the other hand, we have
\begin{equation*}\nabla(\triangle\mathring{A})=\triangle(\nabla\mathring{A})+A\ast
A\ast\nabla A.
\end{equation*}
 Hence
\begin{equation*}\nabla_t(\nabla\mathring{A})=\triangle(\nabla\mathring{A})+A\ast
A\ast\nabla A.
\end{equation*}
This implies
\begin{equation}\label{evo- nabaphi}
\frac{\partial}{\partial t}|\nabla\mathring{A}|^2 \leq
\triangle|\nabla\mathring{A}|^2+c_{17}|A|^2|\nabla\mathring{A}|^2,
\end{equation}
where  $c_{17}$ is a positive constant depending only on $n$.  Here
we have used the inequality $|\nabla A|^2\leq \frac{3n}{2(n-1)}
|\nabla\mathring{A}|^2 $, which was proved in \cite{Andrews-Baker}.

Combining (\ref{evo-phi2}) and (\ref{evo- nabaphi}) we have

\begin{equation}\label{evo- nabaG}
\frac{\partial}{\partial t}G \leq \triangle
G+\bigg(\Big(t-\frac{T_0}{2}\Big)c_{17}|A|^2-1\bigg)|\nabla\mathring{A}|^2+c_{3}|A|^2|\mathring{A}|^2.
\end{equation}

From (\ref{|phi|^2}), (\ref{|A|^2}) and (\ref{evo- nabaG}), we have
for $t\in [\frac{T_0}{2},T_0]$

\begin{equation}\label{evo- nabaG'}
\frac{\partial}{\partial t}G \leq \triangle
G+\bigg(\Big(t-\frac{T_0}{2}\Big)c_{17}c_{16}-1\bigg)|\nabla\mathring{A}|^2+c_{3}c_{16}c_{13}^2\varepsilon^2.
\end{equation}
Set $T_3=\min\{T_0,\frac{T_0}{2}+\frac{1}{c_{17}c_{16}}\}$. Then
$\frac{T_0}{2}\leq T_3\leq T_0$. For  $t\in [\frac{T_0}{2},T_3]$, we
have from (\ref{evo- nabaG'})
\begin{equation*}
\frac{\partial}{\partial t}G \leq \triangle
G+c_{3}c_{16}c_{13}^2\varepsilon^2.
\end{equation*}
By the maximal principle, this implies
\begin{equation*}G(t)-G\Big(\frac{T_0}{2}\Big)\leq
c_{3}c_{16}c_{13}^2\Big(t-\frac{T_0}{2}\Big)\varepsilon^2\end{equation*}
for $t\in [\frac{T_0}{2},T_3]$. Hence
\begin{equation*}
\begin{split}
\Big(t-\frac{T_0}{2}\Big)|\nabla\mathring{A}|^2 \leq&
|\mathring{A}|^2\Big(\frac{T_0}{2}\Big)+
c_{3}c_{16}c_{13}^2\Big(t-\frac{T_0}{2}\Big)\varepsilon^2\\ \leq&
c_{13}^2\varepsilon^2+c_{3}c_{16}c_{13}^2\Big(t-\frac{T_0}{2}\Big)\varepsilon^2.
\end{split}
\end{equation*}
Then for $t\in (\frac{T_0}{2},T_3]$, there holds
\begin{equation}\label{aaaaaaa}
|\nabla\mathring{A}|^2 \leq
\frac{c_{13}^2}{\Big(t-\frac{T_0}{2}\Big)}\varepsilon^2+c_{3}c_{16}c_{13}^2\varepsilon^2.
\end{equation}
On the other hand, from \cite{Andrews-Baker}, we know that $|\nabla
H|^2\leq \frac{3n^2}{2(n-1)}|\nabla\mathring{A}|^2$. Therefore,
(\ref{aaaaaaa}) implies that at $t=T_3$, we have
\begin{equation}\label{bbbbbbb}
|\nabla H|^2 \leq\frac{3n^2}{2(n-1)}\cdot\Bigg(
\frac{c_{13}^2}{\Big(T_3-\frac{T_0}{2}\Big)}+c_{3}c_{16}c_{13}^2\Bigg)\varepsilon^2:=c_{18}^2\varepsilon^2.
\end{equation}
Now we consider the submanifold $M_{T_3}$ at time $T_3$. Let $x,y\in
M_{T_3}$ be two points such that
$|H|(x,T_3)=|H|_{\min}(T_3):=\min_{M_{T_3}}|H|(\cdot,T_3)$ and
$|H|(y,T_3)=|H|_{\max}(T_3):=\max_{M_{T_3}}|H|(\cdot,T_3)$. Let
$l:[0,L]\rightarrow M_{T_3}$ be the shortest geodesic such that
$l(0)=x$ and $l(L)=y$. Define a function $\eta:[0,L]\rightarrow R$
by $\eta(s)=|H|^2(l(s),T_3)$ for $s\in [0,L]$. Then
$\eta(0)=|H|^2_{\min}(T_3)$ and $\eta(L)=|H|^2_{\max}(T_3)$. By the
definition of $\eta$, we have
\begin{equation*}\bigg|\frac{d}{ds}\eta(s)\bigg|=\bigg|\frac{d}{ds}|H|^2(l(s),T_3)\bigg|\leq
\bigg|(\nabla|H|^2)(l(s),T_3)\bigg|\leq\bigg|2(|H||\nabla
H|)(l(s),T_3)\bigg|.
\end{equation*}
This together with (\ref{|A|^2}) and (\ref{bbbbbbb}) implies
\begin{equation}\bigg|\frac{d}{ds}\eta(s)\bigg|\leq
2n^{\frac{1}{2}}c_{16}c_{18}\varepsilon.
\end{equation}
Then we have
\begin{equation}\label{4444}
\eta(L)-\eta(0)=\int_{0}^{L}\frac{d}{ds}\eta ds\leq
diam(M_{T_3})\cdot 2n^{\frac{1}{2}}c_{16}c_{18}\varepsilon.
\end{equation}
Combining (\ref{H-max}), (\ref{diameter}) and (\ref{4444}), we
obtain
\begin{equation}\label{H-min}
|H|^2_{\min}(T_3)\geq c_{10}-c_{19}\varepsilon,
\end{equation}
where $c_{19}=2n^{\frac{1}{2}}c_{16}c_{18}c_{12}$. We put
\begin{equation*}\varepsilon_1=\frac{c_{10}}{2c_{19}}.\end{equation*}
Then if $\varepsilon\leq\varepsilon_1$, (\ref{H-min}) implies that
\begin{equation}\label{H-min'}
|H|^2_{\min}(T_3)\geq\frac{c_{10}}{2}.
\end{equation}

\textbf{Step 3.} In this step, we finish the proof of Theorem
\ref{convergence}.

Consider the submanifold $M_{T_3}$. Set
\begin{equation*}\varepsilon_2=\frac{c_{10}^{\frac{1}{2}}}{[2n(n-1)]^{\frac{1}{2}}c_{13}}\
\ for \ \ n\geq4,\ \ and\ \ \varepsilon_2=\frac{c_{10}^{\frac{1}{2}}}{3\sqrt{2}c_{13}}\ \
for \ \ n=3.
\end{equation*}
By (\ref{|phi|^2}) and (\ref{H-min'}), we see that if
$\varepsilon\leq \min\{\varepsilon_1,\varepsilon_2\}$, then
\begin{equation*}|A|^2(T_3)\leq c_{13}^2\varepsilon_2^2+\frac{1}{n}|H|^2(T_3)
\leq\frac{|H|^2(T_3)}{n-1}\ \ for \ \ n\geq4,\end{equation*} and
\begin{equation*}|A|^2(T_3)\leq \frac{4}{9}|H|^2(T_3)\ \ for \ \ n=3.\end{equation*}
We pick $C_{1}=\min\{\varepsilon_1,\varepsilon_2\}$, which depends
only on $n,\ p,\ V$ and $\Lambda$. Then by the uniqueness of the
mean curvature flow and the convergence theorem proved in
\cite{Andrews-Baker}, we conclude that the mean curvature flow with
initial value $F_0$ converges to a round point in finite time. This
completes the proof of Theorem \ref{convergence}.

\end{proof}

\begin{coro}\label{convergence-A'}
Let $F_0:M^n\rightarrow R^{n+d}$ $(n\geq3)$ be a smooth closed
submanifold. Suppose that the mean curvature is nowhere vanishing.
Then for any fixed $p>1$, there is a positive constant $C_1'$
depending on $n,\ p$, $\min_{M_0}|H|$ and $||A||_{n+2}$, such that
if
\begin{equation*}||\mathring{A}||_{p}<C_1',\end{equation*}
then the mean curvature flow with $F_0$ as initial value has a
unique solution $F:M\times[0,T)\rightarrow R^{n+d}$ in a finite
maximal time interval, and $F_t$ converges uniformly to a point
$x\in R^{n+d}$ as $t\rightarrow T$. The rescaled maps
$\widetilde{F}_t=\frac{F_t-x}{\sqrt{2n(T-t)}}$ converge in
$C^{\infty}$ to a limiting embedding $\widetilde{F}_T$  such that
$\widetilde{F}_T(M)$ is the unit $n$-sphere in some
$(n+1)$-dimensional subspace of $R^{n+d}$.\end{coro}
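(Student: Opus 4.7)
The corollary replaces the volume $V = \mathrm{Vol}(M_0)$ appearing in Theorem \ref{convergence} by the positive lower bound $h_0 := \min_{M_0}|H|$, and the plan is to rerun the three-step proof of that theorem with precisely this substitution.

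First I would repeat Step 1 verbatim: the Moser iteration for $|A|^{n+2}$ and for $h_\sigma = (|\mathring{A}|^2 + nd\sigma^2)^{1/2}$ never invokes $V$, so it still produces a time $T_0 > 0$ depending only on $n, p, \Lambda$, together with constants $c_{13}, c_{16}$ such that on $[T_0/2, T_0]$ one has $\|A(t)\|_{n+2} < 2\Lambda$, $\|\mathring{A}(t)\|_p < 2\varepsilon$, $|\mathring{A}|^2 \leq c_{13}^2 \varepsilon^2$ and $|A|^2 \leq c_{16}$, with $T_{\max} > T_0$ by Theorem \ref{extension}.

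In Step 2 the volume enters exactly twice: through the total mean curvature inequality giving $|H|^2_{\max}(t) \geq n^n\omega_n V^{-1}$, and through Topping's diameter bound combined with H\"{o}lder's inequality. I would replace both by propagating $\min_{M_0}|H| \geq h_0$ under the flow. With $|A|^2 \leq c_{16}$ in hand from Step 1, the vectorial evolution $(\partial_t - \Delta)H = A*A*H$ gives $|(\partial_t - \Delta)H| \leq c\,c_{16}|H|$, and a short-time argument produces $|H|(x,t) \geq h_0/2$ on $M \times [0, T_0]$, possibly after shrinking $T_0$ by a factor depending on $n, h_0, \Lambda$. This delivers $|H|^2_{\max}(T_3) \geq h_0^2/4 =: \tilde c_{10}$, together with the volume bound $\mathrm{Vol}(M_{T_3}) \leq (2/h_0)^{n+2}\int_{M_{T_3}}|H|^{n+2}\,d\mu_{T_3} \leq (4\Lambda/h_0)^{n+2}$, after which Topping's inequality and H\"{o}lder's inequality yield the analogue of (\ref{diameter}) with a new constant $\tilde c_{12}$ depending on $n, h_0, \Lambda$. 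The comparison of $|H|^2_{\max}$ and $|H|^2_{\min}$ at $T_3$ via the gradient bound $|\nabla H| \leq c_{18}\varepsilon$ then yields $|H|^2_{\min}(T_3) \geq \tilde c_{10} - c_{19}'\varepsilon$ for a constant depending only on $n, p, h_0, \Lambda$, and Step 3 proceeds verbatim: one selects $C_1'$ small enough to ensure the Andrews--Baker pinching $|A|^2 \leq |H|^2/(n-1)$ (or $|A|^2 \leq 4|H|^2/9$ when $n=3$) holds at $T_3$, and invokes \cite{Andrews-Baker} for convergence to a round point.

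The one delicate ingredient is the short-time preservation $\min_{M_t}|H| \geq h_0/2$. Because $H$ is vector-valued, the $-2|\nabla H|^2$ term in the evolution of $|H|^2$ obstructs a naive scalar minimum principle: at a minimum of $|H|^2$ one only controls $\langle H, \nabla H\rangle$, not $\nabla H$ itself. I expect to handle this either by a vectorial maximum-principle argument applied to $\partial_t H = \Delta H + Q$ with $|Q| \leq c\,c_{16}|H|$, showing that $|H|$ cannot fall by more than a factor of two on a time scale of order $1/(c\,c_{16})$, or, more robustly, by Moser iteration on $(|H|^2 + \eta)^{-q}$ against the bound $|A|^2 \leq c_{16}$ and the Sobolev inequality of Lemma \ref{Sobolev-ineq-integral}, followed by letting $\eta \to 0$.
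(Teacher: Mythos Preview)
The paper's proof is a one-liner that avoids rerunning Steps~1--3 entirely. Since $|A|^2\geq\frac{1}{n}|H|^2$ pointwise,
\[
\|A\|_{n+2}\;\geq\; n^{-1/2}\|H\|_{n+2}\;\geq\; n^{-1/2}\,h_0\,\mathrm{Vol}(M_0)^{1/(n+2)},
\]
whence $\mathrm{Vol}(M_0)\leq n^{(n+2)/2}h_0^{-(n+2)}\|A\|_{n+2}^{n+2}=:V'$. Observing that the constant $C_1$ in Theorem~\ref{convergence} depends on the volume only through an upper bound, one takes $C_1'=C_1(n,p,V',\|A\|_{n+2})$ and applies Theorem~\ref{convergence} directly.

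Your route, by contrast, has a genuine gap at the propagation step. First, the pointwise bound $|A|^2\leq c_{16}$ is produced by Moser iteration and holds only on $[T_0/2,T_0]$, not on $[0,T_0]$, so it cannot be used to propagate $\min_{M_t}|H|\geq h_0/2$ forward from $t=0$. Second, even if $|(\partial_t-\Delta)H|\leq C|H|$ were available on all of $[0,T_0]$, neither of your mechanisms yields a quantitative lower bound on $|H|$. The evolution $\partial_t|H|^2=\Delta|H|^2-2|\nabla^\perp H|^2+2\langle H,Q\rangle$ carries the bad term $-2|\nabla^\perp H|^2$, and at a spatial minimum of $|H|^2$ only $\langle H,\nabla^\perp H\rangle$ vanishes, not $\nabla^\perp H$ itself; the model $V(x,0)=(\cos kx,\sin kx)$ on $S^1$ with $Q\equiv0$ gives $|V|(x,t)=e^{-k^2t}$, so the time for $|V|$ to halve is $(\ln 2)/k^2$, arbitrarily small even though $C=0$. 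The same term reappears with the wrong sign as $+2(|H|^2+\eta)^{-2}|\nabla^\perp H|^2$ in the evolution of $(|H|^2+\eta)^{-1}$, and Kato's inequality $|\nabla|H||\leq|\nabla^\perp H|$ points the wrong way to absorb it, so the Moser route is equally obstructed. Finally, your derivation of a volume bound at $T_3$ from $\min|H|\geq h_0/2$ is circular, since that lower bound at $T_3$ is precisely what you are trying to establish.
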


\begin{proof} It is easily to see that we can choose $C_1$ in Theorem  \ref{convergence}
 such that $C_1=C_1(n,p,V,||A||_{n+2})$  depending on
$n,\ p$,
 $||A||_{n+2}$ and the upper bound $V$ of the volume of $M_0$.  Since

\begin{equation*}||A||_{n+2}\geq n^{\frac{1}{2}} ||H||_{n+2}\geq n^{\frac{1}{2}}
 Vol(M_0)^{\frac{1}{n+2}}\min_{M_0}|H|,
\end{equation*}
 we have
  \begin{equation*}Vol(M)\leq
 n^{-\frac{n+2}{2}} (\min_{M_0}|H|)^{-(n+2)}||A||_{n+2}^{n+2}:=V'.
 \end{equation*}
Then by  Theorem  \ref{convergence}, we can pick
$C_1'=C_1(n,p,V',||A||_{n+2})$, which depends on $n,\ p$,
$\min_{M_0}|H|$ and $||A||_{n+2}$.

\end{proof}

\begin{theorem}\label{convergence-H}
Let $F_0:M^n\rightarrow R^{n+d}$ $(n\geq3)$ be a smooth closed
submanifold.  Then for any fixed $p>n$, there is a positive constant
$C_2$ depending on $n,p, Vol(M_0)$ and $||H||_{n+2}$, such that if
\begin{equation*}||\mathring{A}||_{p}<C_2,\end{equation*}then the mean curvature flow with $F_0$
as initial value has a unique solution $F:M\times[0,T)\rightarrow
R^{n+d}$ in a finite maximal time interval, and $F_t$ converges
uniformly to a point $x\in R^{n+d}$ as $t\rightarrow T$. The
rescaled maps $\widetilde{F}_t=\frac{F_t-x}{\sqrt{2n(T-t)}}$
converge in $C^{\infty}$ to a limiting embedding $\widetilde{F}_T$
such that $\widetilde{F}_T(M)$ is the unit $n$-sphere in some
$(n+1)$-dimensional subspace of $R^{n+d}$.
\end{theorem}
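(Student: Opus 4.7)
The plan is to follow the three-step proof of Theorem \ref{convergence}, replacing the preserved quantity $\Lambda=\|A\|_{n+2}$ by $\Lambda'=\|H\|_{n+2}$. This is feasible because the Sobolev inequality of Lemma \ref{Sobolev-ineq-integral}, and therefore every Moser iteration built on it, depends only on $\|H\|_{n+2}$ and not on $\|A\|_{n+2}$. The stronger exponent hypothesis $p>n$ compensates for the loss of direct control on $\|A\|_{n+2}$.

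The first task is to set
\[
T=\sup\{t\in[0,T_{\max}):\ \|H(t)\|_{n+2}<2\Lambda',\ \|\mathring{A}(t)\|_p<2\varepsilon\}
\]
and prove $T>T_0$ for some $T_0>0$ depending only on $n,p,V,\Lambda'$. To preserve $\|H\|_{n+2}$, I would use the evolution inequality $\partial_t|H|^2\le\Delta|H|^2+c_n|A|^2|H|^2$ and compute $\tfrac{d}{dt}\|H\|_{n+2}^{n+2}$. The driving term $\int|A|^2|H|^{n+2}\,d\mu_t$ is split via $|A|^2=|\mathring{A}|^2+\tfrac{1}{n}|H|^2$: the $|H|^{n+4}$ piece is handled by the H\"older--Sobolev chain of (\ref{ineq-fp+1-1}) with $|H|$ in place of $|A|$, while the $|\mathring{A}|^2|H|^{n+2}$ piece is estimated by H\"older with exponents $p/2$ and $p/(p-2)$; since $p>n$ implies $p/(p-2)<n/(n-2)$, the resulting factor involving $|H|$ is controlled by Lemma \ref{Sobolev-ineq-integral} with a gradient contribution absorbable on the left.

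Second, to preserve $\|\mathring{A}\|_p$, I would follow (\ref{evo-h})--(\ref{ineq-h3}) essentially verbatim, redoing only the estimate (\ref{second-term}) of $\int|A|^2 h_\sigma^r\,d\mu_t$. Split as before, the $|H|^2$-piece is handled as in the original (\ref{second-term}) with $\|H\|_{n+2}<2\Lambda'$ in place of the bound on $\|A\|_{n+2}$, whereas the new $|\mathring{A}|^2$-piece is estimated by
\[
\int|\mathring{A}|^2 h_\sigma^r\,d\mu_t\le\|\mathring{A}\|_p^2\Big(\int h_\sigma^{rp/(p-2)}\,d\mu_t\Big)^{(p-2)/p},
\]
interpolating the right-hand integral between $L^r$ and $L^{rn/(n-2)}$ and invoking the Sobolev inequality; because $p>n$, the absorption into the gradient term works, leaving a remainder proportional to $\|\mathring{A}\|_p^2<4\varepsilon^2$. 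Choosing $\varepsilon$ small, $\|\mathring{A}(t)\|_p<3\varepsilon/2$ is maintained on some $[0,T_2^*]$, and combined with the previous step one obtains $T>T_0$, the case $T=T_{\max}$ being handled verbatim by the extension Theorem \ref{extension}.

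With $T>T_0$ established, Steps~2 and 3 of the proof of Theorem \ref{convergence} carry over, since the Moser iteration (\ref{ineq-h4})--(\ref{improt-estimate}) and the subsequent pointwise and gradient estimates use only Lemma \ref{Sobolev-ineq-integral} together with the preserved bounds $\|H\|_{n+2}<2\Lambda'$ and $\|\mathring{A}\|_p<2\varepsilon$; once $|\mathring{A}|\le C\varepsilon$ is available on $[T_0/2,T_0]$, combining this with $|A|^2=|\mathring{A}|^2+\tfrac{1}{n}|H|^2$ and $\|H\|_{n+2}<2\Lambda'$ recovers a bound on $\|A\|_{n+2}$ that drives the $|A|^2$ iteration (\ref{|A|^2}). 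As before, the Andrews--Baker pinching $|A|^2\le|H|^2/(n-1)$ (or $4|H|^2/9$ when $n=3$) holds at some $T_3\in[T_0/2,T_0]$, and the convergence statement follows from \cite{Andrews-Baker}. The main obstacle is the interpolation estimate for $\int|\mathring{A}|^2 h_\sigma^r\,d\mu_t$ in the second step, where the hypothesis $p>n$ (rather than $p>1$) is essential.
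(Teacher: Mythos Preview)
Your proposal is correct and follows essentially the same route as the paper: replace $\|A\|_{n+2}$ by $\Lambda'=\|H\|_{n+2}$, split $|A|^2=|\mathring{A}|^2+\tfrac{1}{n}|H|^2$ in every evolution inequality, and control the new $|\mathring{A}|^2$-piece by H\"older with exponent $p/2$ followed by interpolation between $L^r$ and $L^{rn/(n-2)}$ and the Sobolev inequality (this is precisely where $p>n$ is essential, as you identify). The only cosmetic difference is in Step~2: the paper Moser-iterates $w=|H|^2$ directly via the same split to obtain a pointwise bound on $|H|^2$, and then combines with $|\mathring{A}|\le C\varepsilon$ to bound $|A|^2$ pointwise, rather than first recovering $\|A\|_{n+2}$ and iterating $u=|A|^2$ as you suggest; both variants work.
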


\begin{proof}The idea to prove Theorem \ref{convergence-H} is similar to the proof of Theorem \ref{convergence}. We set $\Lambda=||H||_{n+2}$.
Suppose
\begin{equation}
||\mathring{A}||_p<\varepsilon
\end{equation} for some fixed $p>n$ and $\varepsilon\in (0,100]$.
Set
\begin{equation*}T'=\sup\{t\in [0,T_{\max}):
 ||H||_{n+2}<\Lambda,
||\mathring{A}||_p<2\varepsilon\}.
\end{equation*}
As in the proof of Theorem \ref{convergence}, we consider the mean
curvature flow on the time interval $[0,T')$.

For $|H|^2$, we have the following inequality (see
\cite{Andrews-Baker,WaM1} for the derivation)
\begin{equation*}
\frac{\partial}{\partial t} |H|^2\leq\Delta |H|^2 -2|\nabla
H|^2+c_{20}|A|^2|H|^2,
\end{equation*} for some positive constant $c_{20}$ depending only on $n$.
Set $w=|H|^2$. Then
\begin{equation}\label{w-H}
\frac{\partial}{\partial t}w\leq \Delta
w+c_{20}|\mathring{A}|^2w+\frac{c_{20}}{n}w^2.
\end{equation}
From (\ref{w-H}) we have for $r>1$
\begin{equation}\label{w-integral111}
\begin{split}
\frac{1}{r}\frac{\partial}{\partial t}\int_{M_t} w^{r}d\mu_t
\leq&-\frac{4(r-1)}{r^2}\int_{M_t}|\nabla w
^{\frac{r}{2}}|^2d\mu_t\\
&+c_{20}\int_{M_t}|\mathring{A}|^2w^{r}d\mu_t+\frac{c_{20}}{n}\int_{M_t}w^{r+1}d\mu_t.
\end{split}
\end{equation}

Now we let $r=\frac{n+2}{2}$. As in (\ref{ineq-fp+1-1}), we have

\begin{equation}\label{ineq-w}
\begin{split}
\int_{M_t} w^{\frac{n+2}{2}+1}d\mu_t
\leq&C_n^{\frac{n}{n+2}}\bigg(\int_{M_t}w^{\frac{n+2}{2}}d\mu_t\bigg)^{2}+C_n^{\frac{n}{n+2}}\cdot
\frac{2}{n+2}\epsilon^{\frac{n+2}{2}}\bigg(\int_{M_t}w^{\frac{n+2}{2}}d\mu_t\bigg)^{2}\\
&+C_n^{\frac{n}{n+2}}\cdot\frac{n}{n+2}\epsilon^{-\frac{n+2}{n}}\int_{M_t}|\nabla
w^{\frac{n+2}{4}}|^2d\mu_t,
\end{split}
\end{equation}
for any $\epsilon>0$.

As in (\ref{second-term}), we have

\begin{equation}
\begin{split}\label{second-term'}
\int_{M_t}|\mathring{A}|^2w^{\frac{n+2}{2}}d\mu_t
\leq&(200)^2C_{n}^{\frac{n}{n+2}}\int_M
w^{\frac{n+2}{2}}d\mu_t\\
&+(200)^{n+2}C_{n}^{\frac{n}{n+2}}\cdot\frac{2}{n+2}\mu^{\frac{n+2}{2}}\int_{M_t}w^{\frac{n+2}{2}}
d\mu_t\\
&+(200)^{n+2}C_{n}^{\frac{n}{n+2}}\cdot\frac{n}{n+2}\mu^{-\frac{n+2}{n}}\int_M|\nabla
w^{\frac{n+2}{4}}|^2d\mu_t,
\end{split}
\end{equation}
for any $\mu>0$.

Therefore, combining (\ref{w-integral111}), (\ref{ineq-w}) and
(\ref{second-term'}) we have

\begin{equation}\label{w-integral}
\begin{split}
\frac{2}{n+2}\cdot\frac{\partial}{\partial t}\int_{M_t} w^{\frac{n+2}{2}}d\mu_t
\leq&\bigg(c_{20}(200)^{n+2}C_{n}^{\frac{n}{n+2}}\cdot\frac{n}{n+2}\mu^{-\frac{n+2}{n}}
+\frac{c_{20}}{n}\cdot C_n^{\frac{n}{n+2}}\cdot\frac{n}{n+2}\epsilon^{-\frac{n+2}{n}}\\
&-\frac{8n}{(n+2)^2}\bigg)\int_{M_t}|\nabla w
^{\frac{r}{2}}|^2d\mu_t\\
&+ c_{20}\bigg((200)^2C_{n}^{\frac{n}{n+2}}+(200)^{n+2}C_{n}^{\frac{n}{n+2}}\cdot\frac{2}{n+2}\mu^{\frac{n+2}{2}}\bigg)\int_{M_t}w^{\frac{n+2}{2}}
d\mu_t\\
&+\frac{c_{20}}{n}\bigg(C_n^{\frac{n}{n+2}}+C_n^{\frac{n}{n+2}}\cdot
\frac{2}{n+2}\epsilon^{\frac{n+2}{2}}\bigg)\bigg(\int_{M_t}w^{\frac{n+2}{2}}d\mu_t\bigg)^{2}.
\end{split}
\end{equation}
Now we pick
\begin{equation*}\mu=\epsilon=\bigg(\frac{c_{20}(200)^{n+2}C_{n}^{\frac{n}{n+2}}\cdot\frac{n}{n+2}
+\frac{c_{20}}{n}\cdot C_n^{\frac{n}{n+2}}\cdot\frac{n}{n+2}}{\frac{6n-4}{(n+2)^2}}\bigg)^{\frac{n}{n+2}}.\end{equation*}
Then from (\ref{w-integral}), we have
\begin{equation}\label{w-integral'}
\frac{\partial}{\partial t}\int_{M_t} w^{\frac{n+2}{2}}d\mu_t
\leq c_{21}\int_{M_t}w^{\frac{n+2}{2}}
d\mu_t+c_{22}\bigg(\int_{M_t}w^{\frac{n+2}{2}}d\mu_t\bigg)^{2},
\end{equation}
where $c_{21}$ and   $c_{22}$ are positive constants depending only on $n$.

Let $\rho(t)$ be the positive solution to the following Bernoulli equation

\begin{equation*}
\begin{split}\frac{d}{dt}\rho=&c_{21}\rho+c_{22}\rho^2,\\
\rho(0)=&\Lambda^{n+2}.
\end{split}
\end{equation*}
Then
\begin{equation*}
\rho(t)=\frac{e^{c_{21}t}}{\frac{1}{\Lambda^{n+2}}+\frac{c_{22}}{c_{21}}-\frac{c_{22}}{c_{21}}e^t},
 \ \ t\in \bigg[0,\frac{2\ln\Big(\frac{c_{21}}{c_{22}\Lambda^{n+2}}+1\Big)}{2c_{21}}\bigg).
\end{equation*}
Let $T_1'>0$ such that $\rho(t)\leq (2\Lambda)^{n+2}$ for $t\in
[0,T_1']$. Then by the maximal principle, we see that for $t\in
[0,\min\{T',T_1'\})$, there holds
\begin{equation*}
\int_{M_t} w^{\frac{n+2}{2}}d\mu_t<
\Big(\frac{3}{2}\Lambda\Big)^{n+2},
\end{equation*}
or equivalently,
\begin{equation}\label{||H(t)||_{n+2}'}
||H(t)||_{n+2}<\frac{3}{2}\Lambda.
\end{equation}

Next, from (\ref{evo-h}) we have

\begin{equation}\label{evo-h-H-phi}
\frac{\partial}{\partial t}h_\sigma\leq\Delta h_\sigma
+c_3|\mathring{A}|^2h_\sigma+\frac{c_3}{n}|H|^2h_\sigma.
\end{equation}

From (\ref{evo-h-H-phi}) we have for $r>1$
\begin{equation}\label{f-integral-H}
\begin{split}
\frac{1}{r}\frac{\partial}{\partial t}\int_{M_t} h_\sigma^{r}d\mu_t
\leq&-\frac{4(r-1)}{r^2}\int_{M_t}|\nabla h_\sigma
^{\frac{r}{2}}|^2d\mu_t\\
&+c_3\int_{M_t}|\mathring{A}|^2h_\sigma^{r}d\mu_t+\frac{c_3}{n}\int_{M_t}|H|^2h_\sigma^{r}d\mu_t.
\end{split}
\end{equation}

As in (\ref{second-term}), we have for $r\geq p>n$, there holds

\begin{equation}
\begin{split}\label{phi.h_sigma^p}
\int_{M_t}|\mathring{A}|^2h_\sigma^{r}d\mu_t
\leq&(200)^2C_{n}^{\frac{n}{p}}\int_M
h_\sigma^rd\mu_t\\
&+(200)^{p}C_{n}^{\frac{n}{p}}\cdot\frac{p-n}{p}\nu^{\frac{p}{p-n}}\int_{M_t}h_\sigma^r
d\mu_t\\
&+(200)^{p}C_{n}^{\frac{n}{p}}\cdot\frac{n}{p}\nu^{-\frac{p}{n}}\int_M|\nabla
h_\sigma^{\frac{r}{2}}|^2d\mu_t,
\end{split}
\end{equation}
 and
 \begin{equation}
\begin{split}\label{phi.h-H_sigma^p}
\int_{M_t}|H|^2h_\sigma^{r}d\mu_t
\leq&(200)^2C_{n}^{\frac{n}{n+2}}\int_M
h_\sigma^rd\mu_t\\
&+(200)^{n+2}C_{n}^{\frac{n}{n+2}}\cdot\frac{2}{n+2}\varrho^{\frac{n+2}{2}}\int_{M_t}h_\sigma^r
d\mu_t\\
&+(200)^{n+2}C_{n}^{\frac{n}{n+2}}\cdot\frac{n}{n+2}\varrho^{-\frac{n+2}{n}}\int_M|\nabla
h_\sigma^{\frac{r}{2}}|^2d\mu_t,
\end{split}
\end{equation}
for any $\nu,\ \varrho>0$.

From (\ref{f-integral-H}), (\ref{phi.h_sigma^p}) and
(\ref{phi.h-H_sigma^p}), we have
\begin{equation}\label{f-integral-H2}
\begin{split}
\frac{1}{r}\frac{\partial}{\partial t}\int_{M_t} h_\sigma^{r}d\mu_t
\leq&\bigg(c_3(200)^{p}C_{n}^{\frac{n}{p}}\cdot\frac{n}{p}\nu^{-\frac{p}{n}}
+\frac{c_3}{n}(200)^{n+2}C_{n}^{\frac{n}{n+2}}\cdot\frac{n}{n+2}\varrho^{-\frac{n+2}{n}}\\
&-\frac{4(r-1)}{r^2}\bigg)\int_{M_t}|\nabla
h_\sigma^{\frac{r}{2}}|^2d\mu_t\\
&+\bigg(c_{3}(200)^2C_{n}^{\frac{n}{p}}+c_{3}(200)^{p}C_{n}^{\frac{n}{p}}\cdot\frac{p-n}{p}\nu^{\frac{p}{p-n}}\\
&+\frac{c_3}{n}\cdot (200)^2C_{n}^{\frac{n}{n+2}}+\frac{c_3}{n}\cdot (200)^{n+2}C_{n}^{\frac{n}{n+2}}\cdot\frac{2}{n+2}\varrho^{\frac{n+2}{2}}\bigg)
\int_{M_t}h_\sigma^r
d\mu_t.
\end{split}
\end{equation}

Pick
\begin{equation*}\nu^{\frac{p}{n+2}}=\varrho=\bigg(\frac{c_3(200)^{p}C_{n}^{\frac{n}{p}}\cdot\frac{n}{p}
+\frac{c_3}{n}(200)^{n+2}C_{n}^{\frac{n}{n+2}}\cdot\frac{n}{n+2}}{\frac{3r-4}{r^2}}\bigg)^{\frac{n}{n+2}}.\end{equation*}
Since $r\geq p>n$, then
\begin{equation*}\nu^{\frac{p}{n+2}}=\varrho\leq\bigg(\frac{c_3(200)^{p}C_{n}^{\frac{n}{p}}\cdot\frac{n}{p}
+\frac{c_3}{n}(200)^{n+2}C_{n}^{\frac{n}{n+2}}\cdot\frac{n}{n+2}}{3p-4}\bigg)^{\frac{n}{n+2}}\cdot
r^{\frac{2n}{n+2}}:=c_{23}\cdot r^{\frac{2n}{n+2}}.\end{equation*}
 Then from
(\ref{f-integral-H2}), we have

\begin{equation}\label{f-integral-H2'}
\begin{split}
\frac{\partial}{\partial t}\int_{M_t} h_\sigma^{r}d\mu_t
+\int_{M_t}|\nabla h_\sigma^{\frac{r}{2}}|^2d\mu_t \leq
c_{24}r^{\frac{p+n}{p-n}}\int_{M_t}h_\sigma^r d\mu_t,
\end{split}
\end{equation}
where
\begin{equation*}
\begin{split}c_{24}=\max\Big\{&c_{3}(200)^2C_{n}^{\frac{n}{p}}+
\frac{c_3}{n}\cdot (200)^2C_{n}^{\frac{n}{n+2}},\ \
c_{23}^{\frac{n+2}{p-n}}\cdot
c_{3}(200)^{p}C_{n}^{\frac{n}{p}}\cdot\frac{p-n}{p},\\
&c_{23}^{\frac{n+2}{2}}\cdot  \frac{c_3}{n}\cdot
(200)^{n+2}C_{n}^{\frac{n}{n+2}}\cdot\frac{2}{n+2} \Big\}.
\end{split}\end{equation*}

Letting $r=p$, we have from (\ref{f-integral-H2'})

\begin{equation}\label{f-integral-H2''}
\begin{split}
\frac{\partial}{\partial t}\int_{M_t} h_\sigma^{p}d\mu_t
 \leq
c_{24}p^{\frac{p+n}{p-n}}\int_{M_t}h_\sigma^p d\mu_t,
\end{split}
\end{equation}
Now we apply  the maximal principle and let $\sigma\rightarrow 0$.
Then for $t\in [0,\min \{T', T_2'\})$, where
$T_2'=c_{24}^{-1}p^{-\frac{2n}{p-n}-1}$, there holds
\begin{equation*}||\mathring{A}(t)||_p< \frac{3}{2}\varepsilon.\end{equation*}

Set $T_0'=\min\{T_1',T_2'\}$. As in the Step 1 of the proof  of
Theorem \ref{convergence}, we can prove that $T'>T_0'$ by contradiction. In fact,
from the smoothness of the mean curvature flow we exclude the case
where $T'<T_{\max}$. For the case where  $T'=T_{\max}$, since we have
(\ref{f-integral-H2'}), which has similar form as (\ref{ineq-h2}),
we can apply the standard Moser process to obtain the following
estimate for small $\theta>0$.

\begin{equation}\label{improt-estimate'}
h_\sigma(x,t)\leq\bigg(1+\frac{2}{n}\bigg)^{\frac{n(n+2)(p+n)}{4p(p-n)}}
c_{25}^{\frac{n}{2p}}\bigg(c_{24}p^{\frac{p+n}{p-n}}+\frac{(n+2)^2}{2nt}
\bigg)^{\frac{n+2}{2p}}\bigg(\int_{0}^{T_{\max}-\theta}\int_{M_t}h_\sigma^{p}d\mu_tdt\bigg)^{\frac{1}{p}}.
\end{equation}
Here $c_{25}=C_n^{\frac{n-2}{n}}\cdot
\max\{1,(2\Lambda)^{n+2}T_0'\}$.

Now we let $\sigma\rightarrow 0$ and  $\theta\rightarrow 0$. Then we
have for $t\in [\frac{T_{\max}}{2},T_{\max})$,
\begin{equation*}
|\mathring{A}|^2(x,t)\leq C'(n,p,\Lambda,\varepsilon,T_{\max})<+\infty.
\end{equation*}
This implies that
\begin{equation*}|A|^2\leq a'|H|^2+b'
\end{equation*}
on $[0,T_{\max})$ for some positive constants $a'$ and $b'$
independent of $t$. On the other hand we also have
$\int_{0}^{T_{\max}}\int_{M_t}|H|^{n+2}d\mu_tdt<+\infty.$ Applying
Theorem \ref{extension} we  conclude that the mean curvature flow
can be extended over time $T_{\max}$. This is a contradiction.

We consider the mean curvature flow for $t\in
[\frac{T_0'}{2},T_0']$. As (\ref{improt-estimate'}), we have
\begin{equation}\label{improt-estimate''}
|\mathring{A}|(x,t)\leq\bigg(1+\frac{2}{n}\bigg)^{\frac{n(n+2)(p+n)}{4p(p-n)}}
c_{25}^{\frac{n}{2p}}\bigg(c_{24}p^{\frac{p+n}{p-n}}+\frac{(n+2)^2}{nT_0'}
\bigg)^{\frac{n+2}{2p}}T_0'^{\frac{1}{p}}\cdot2\varepsilon:=c_{27}\varepsilon.
\end{equation}

By (\ref{w-H}), we have
\begin{equation*}
\frac{\partial}{\partial t}w\leq \Delta
w+c_{20}|\mathring{A}|^2w+\frac{c_{20}}{n}|H|^2w.
\end{equation*}
Then similarly as (\ref{improt-estimate''}), we get for $t\in
[\frac{T_0'}{2},T_0']$
\begin{equation}\label{improt-estimate'''}
|H|^2(x,t)\leq\bigg(1+\frac{2}{n}\bigg)^{\frac{n(n+1)}{2}}
c_{28}^{\frac{n}{n+2}}\bigg(c_{27}(n+2)^{n+1}+\frac{(n+2)^2}{nT_0'}
\bigg)T_0'^{\frac{2}{n+2}}\cdot(2\Lambda)^2:=c_{29}.
\end{equation}
Here
\begin{equation*}
\begin{split}c_{27}=\max\Big\{&c_{20}(200)^2C_{n}^{\frac{n}{n+2}}+
\frac{c_{20}}{n}\cdot (200)^2C_{n}^{\frac{n}{n+2}},\ \
c_{23}'^{\frac{n+2}{2}}\cdot
c_{20}(200)^{n+2}C_{n}^{\frac{n}{n+2}}\cdot\frac{2}{n+2},\\
&c_{23}'^{\frac{n+2}{2}}\cdot  \frac{c_{20}}{n}\cdot
(200)^{n+2}C_{n}^{\frac{n}{n+2}}\cdot\frac{2}{n+2} \Big\}.
\end{split}\end{equation*}
\begin{equation*}c_{28}=C_n^{\frac{n-2}{n}}\cdot \max\{1,(2\Lambda)^{n+2}T_0'\},\end{equation*}
and
\begin{equation*}c_{23}'=\bigg(\frac{c_3(200)^{n+2}C_{n}^{\frac{n}{n+2}}\cdot\frac{n}{n+2}
+\frac{c_3}{n}(200)^{n+2}C_{n}^{\frac{n}{n+2}}\cdot\frac{n}{n+2}}{3n+2}\bigg)^{\frac{n}{n+2}}.\end{equation*}

By (\ref{improt-estimate''}) and (\ref{improt-estimate'''}), we have
\begin{equation}\label{improt-estimate''''}
|A|^2(x,t)\leq c_{27}^2100^2+\frac{c_{29}}{n}:=c_{30},
\end{equation}
for $t\in [\frac{T_0'}{2},T_0']$. As in Step 2 of the proof of
Theorem \ref{convergence}, we have for $t\in [0,T_{\max})$, there
hold
\begin{equation}\label{H-max'}|H|^2_{\max}(t)\geq n^n\omega_n V^{-1}:=c_{31},\end{equation}
and
\begin{equation}\label{diameter'}
diam(M_t)\leq c_{11}(2\Lambda)^{n-1}V^{\frac{3}{n+2}}:=c_{32},
\end{equation}
where $V=Vol(M_0)$.

 Using a similar argument,  for $t\in
[\frac{T_0'}{2},T_3']$, where
$T_3'=\min\{T_0',\frac{T_0'}{2}+\frac{1}{c_{17}c_{30}}\}$,  we have
\begin{equation}\label{bbbbbbb'}
|\nabla H|^2 \leq\frac{3n^2}{2(n-1)}\cdot\Bigg(
\frac{c_{27}^2}{\Big(t-\frac{T_0'}{2}\Big)}+c_{3}c_{31}c_{27}^2\Bigg)\varepsilon^2:=c_{33}^2\varepsilon^2.
\end{equation}
Combining (\ref{H-max'}), (\ref{diameter'}) and (\ref{bbbbbbb'}), we
obtain that, at time $T_3'$, there is
\begin{equation*}\varepsilon_1'=\frac{c_{31}}{2n^{\frac{1}{2}}c_{30}c_{32}c_{33}},\end{equation*}
such that if $\varepsilon\leq\varepsilon_1'$, then

\begin{equation}\label{H-min''}
|H|^2_{\min}(T_3')\geq\frac{c_{31}}{2}.
\end{equation}

 Set
\begin{equation*}\varepsilon_2'=\frac{c_{31}^{\frac{1}{2}}}{[2n(n-1)]^{\frac{1}{2}}c_{27}}\
\ for \ \ n\geq4,\ \ and\ \ \varepsilon_2'=\frac{c_{31}^{\frac{1}{2}}}{3\sqrt{2}c_{27}}\ \
for \ \ n=3.
\end{equation*}
By (\ref{|phi|^2}) and (\ref{H-min'}), we see that if
$\varepsilon\leq \min\{\varepsilon_1',\varepsilon_2',100\}$, then
\begin{equation*}|A|^2(T_3')\leq c_{27}^2\varepsilon_2^2+\frac{1}{n}|H|^2(T_3')
\leq\frac{|H|^2(T_3')}{n-1}\ \ for \ \ n\geq4,\end{equation*} and
\begin{equation*}|A|^2(T_3')\leq \frac{4}{9}|H|^2(T_3')\ \ for \ \ n=3.\end{equation*}
Then we can  pick $C_{2}=\min\{\varepsilon_1',\varepsilon_2',100\}$,
which depends only on $n,\ p,\ V$ and $\Lambda$, and this completes
the proof of Theorem \ref{convergence-H}.
\end{proof}

Using a similar argument as in the proof of Corollary
\ref{convergence-A'}, we have following

\begin{coro}\label{convergence-H'}
Let $F_0:M^n\rightarrow R^{n+d}$ $(n\geq3)$ be a smooth closed
submanifold. Suppose that the mean curvature is nowhere vanishing.
Then for any fixed $p>n$, there is a positive constant $C_2'$
depending on $n,\ p$, $\min_{M_0}|H|$ and $||H||_{n+2}$, such that
if
\begin{equation*}||\mathring{A}||_{p}<C_2',\end{equation*}
then the mean curvature flow with $F_0$ as initial value has a
unique solution $F:M\times[0,T)\rightarrow R^{n+d}$ in a finite
maximal time interval, and $F_t$ converges uniformly to a point
$x\in R^{n+d}$ as $t\rightarrow T$. The rescaled maps
$\widetilde{F}_t=\frac{F_t-x}{\sqrt{2n(T-t)}}$ converge in
$C^{\infty}$ to a limiting embedding $\widetilde{F}_T$  such that
$\widetilde{F}_T(M)$ is the unit $n$-sphere in some
$(n+1)$-dimensional subspace of $R^{n+d}$.\end{coro}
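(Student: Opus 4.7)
The plan is to mimic the proof of Corollary \ref{convergence-A'}, replacing the role of $\|A\|_{n+2}$ with $\|H\|_{n+2}$ throughout. The key observation is that the constant $C_2$ produced by Theorem \ref{convergence-H} depends on $n$, $p$, $\|H\|_{n+2}$ and an upper bound $V$ for $\mathrm{Vol}(M_0)$, so it suffices to derive such an upper bound from the hypotheses of the corollary, namely $\|H\|_{n+2}$ and $\min_{M_0}|H|$.

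First I would inspect the proof of Theorem \ref{convergence-H} to confirm that the constant $C_2$ can indeed be written as $C_2=C_2(n,p,V,\|H\|_{n+2})$, monotone (in the sense that any upper bound $V'\geq\mathrm{Vol}(M_0)$ can be substituted for $V$ without invalidating the argument). This is clear because $\mathrm{Vol}(M_0)$ enters only through the global volume bound $\mathrm{Vol}(M_t)\leq\mathrm{Vol}(M_0)$ used in deriving (\ref{H-max'}) and (\ref{diameter'}).

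Next, under the assumption that $|H|$ is nowhere vanishing on $M_0$, I would bound the volume by the trivial estimate
\begin{equation*}
\|H\|_{n+2}^{n+2}=\int_{M_0}|H|^{n+2}\,d\mu\geq\bigl(\min_{M_0}|H|\bigr)^{n+2}\mathrm{Vol}(M_0),
\end{equation*}
which gives
\begin{equation*}
\mathrm{Vol}(M_0)\leq\bigl(\min_{M_0}|H|\bigr)^{-(n+2)}\|H\|_{n+2}^{n+2}=:V'.
\end{equation*}
Setting $C_2':=C_2(n,p,V',\|H\|_{n+2})$, which now depends only on $n$, $p$, $\min_{M_0}|H|$ and $\|H\|_{n+2}$, the hypothesis $\|\mathring{A}\|_p<C_2'$ together with $\mathrm{Vol}(M_0)\leq V'$ allows us to invoke Theorem \ref{convergence-H} and conclude that the mean curvature flow exists on a finite maximal time interval and contracts to a round point with the stated rescaled convergence.

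No serious obstacle is expected; the only subtlety is checking monotonicity of $C_2$ in the volume parameter within the proof of Theorem \ref{convergence-H}, which holds because each quantity depending on $V$ there (the lower bound $c_{31}=n^n\omega_n V^{-1}$ for $|H|^2_{\max}$ and the diameter bound $c_{32}$) is monotone in $V$ in a way that only weakens the smallness constant when $V$ is enlarged. Hence replacing $\mathrm{Vol}(M_0)$ by its upper bound $V'$ is legitimate and gives the desired corollary.
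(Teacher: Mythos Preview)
Your proposal is correct and follows exactly the approach the paper intends: the paper does not give a separate proof of Corollary \ref{convergence-H'} but simply states it follows by the argument of Corollary \ref{convergence-A'}, which bounds $\mathrm{Vol}(M_0)$ via $\|H\|_{n+2}\geq(\min_{M_0}|H|)\,\mathrm{Vol}(M_0)^{1/(n+2)}$ and then invokes Theorem \ref{convergence-H} with $V'=(\min_{M_0}|H|)^{-(n+2)}\|H\|_{n+2}^{n+2}$. Your additional remark on the monotonicity of $C_2$ in the volume parameter is a reasonable sanity check that the paper leaves implicit.
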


\section{Open Problems}

In this section, we propose several open problems for the
convergence of the mean curvature flow of submanifolds. Denote by
$F^{n+d}(c)$ the $(n+d)$-dimensional complete simply connected space
form
 of constant sectional curvature $c$. Let $M$ be an
$n$-dimensional closed oriented submanifold in $F^{n+d}(c)$ with
$c\geq0$. Shiohama-Xu \cite{Shiohama-Xu-97} showed that if
$|A|^2<\alpha(n,H,c)$, then $M$
 is homeomorphic to a sphere for $n\geq4$, or diffeomorphic to a
 spherical space form for $n=3$. Here
 $$\alpha(n,H,c)=nc+\frac{nH^2}{2(n-1)}-\frac{n-2}{2(n-1)}\sqrt{H^2+4(n-1)cH^2}.$$
In \cite{Xu-Zhao}, Xu-Zhao proved several differentiable sphere
theorems for submanifolds satisfying suitable pinching conditions in
a Riemannian manifold. Recently, Xu-Gu \cite{XJ2} strengthened
Shiohama-Xu's topological sphere theorem for $c=0$ to be a
differentiable sphere theorem.
 Motivated by these sphere theorems and
the convergence theorem for the mean curvature flow  due to Andrews
and Baker \cite{Andrews-Baker}, we propose the following
\begin{problem}Let $M$ be an $n$-dimensional $(n\geq2)$ smooth closed
submanifold in  $F^{n+d}(c)$ with $c>0$. Let $M_t$ be the solution
of the mean curvature flow with $M$ as initial submanifold. Suppose
$M$ satisfies
\begin{eqnarray*}
|A|^2<\alpha(n,H,c).
\end{eqnarray*}
Then one of the following holds.

a) The mean curvature flow has a smooth solution $M_t$ on a finite
time interval $0\leq t<T$ and the $M_t$'s converge uniformly to a
round point as $t\rightarrow T$.

b) The mean curvature flow has a smooth solution $M_t$ for all $0
\leq t < \infty$ and the $M_t$'s converge in the $C^\infty$-topology
to a smooth totally geodesic submanifold $M_\infty$ in $F^{n+d}(c)$.

In particular, $M$ is diffeomorphic to the standard $n$-sphere.
\end{problem}

In \cite{Shiohama-Xu-94}, Shiohama-Xu obtained a topological sphere
theorem for closed submanifolds satisfying $||\mathring{A}||_{n}<C(n)$ in
$F^{n+d}(c)$ with $c\geq0$
 for an explicit positive constant
$C(n)$ depending only on $n$. The following problems arise out of
this topological sphere theorem and our convergence theorems.

\begin{problem}\label{problem}Let $M$ be an $n$-dimensional $(n\geq2)$ smooth closed
submanifold in  $R^{n+d}$ . Let $M_t$ be the solution of the mean
curvature flow with $M$ as initial submanifold. Then there exists an
positive constant $D(n)$ depending only on $n$, such that if
 $M$ satisfies
\begin{eqnarray*}
||\mathring{A}||_{n}<D(n),
\end{eqnarray*}
then the mean curvature flow has a solution $M_t$'s on a finite time
interval $[0,T)$ and $M_t$ converges uniformly to a round point. In
particular, $M$ is diffeomorphic to the standard $n$-sphere.
\end{problem}

For any $4$-dimensional compact manifold $M$ which is
homeomorphic to a sphere, we hope to show that there exists an
isometric embedding of the 4-sphere into an Euclidean space such
that $||\mathring{A}||_{4}$ is small enough in the sense of Theorems 1.2 or
Open problem \ref{problem}. In fact, Shiohama and the second author
\cite{Shiohama-Xu-94} proved that for any $4$-dimensional compact
submanifold $M$ in an Euclidean space, we have $||\mathring{A}||_{4}\ge
C(\Sigma_{i=1}^3\beta_i)^{1/4}$, where C is a universal positive
constant and $\beta_i$ is the $i$-th Betti number of $M$, $i=1, 2,
3$. Therefore it's possible to isometrically embed a topological
4-sphere into an Euclidean space with small upper bound for
$||\mathring{A}||_{4}$. If this can be done, then we can deduce that $M$ is
diffeomorphic to a sphere.  This may open a way to prove the smooth Poincar\'e conjecture
in dimension $4$ which is now one of the most challenging problems in geometry and topology.

In general, for a homotopy sphere $M$, we can try to find its embedding
in Euclidean spaces with small  integral norm $||\mathring{A}||_{n}$. Our results on mean
curvature flow of arbitrary codimension reduce the
problem of proving whether $M$ is diffeomorphic to a sphere to the problem of finding
the optimal embeddings of $M$ into Euclidean spaces.

\begin{problem}Let $M$ be an $n$-dimensional $(n\geq2)$ smooth closed
submanifold in $F^{n+d}(c)$ with $c>0$. Let $M_t$ be the solution of
the mean curvature flow with $M$ as initial submanifold. Then there
exists an positive constant $E(n)$ depending only on $n$, such that
if
 $M$ satisfies
\begin{eqnarray*}
||\mathring{A}||_{n}<E(n),
\end{eqnarray*}
then one of the following holds.

a) The mean curvature flow has a smooth solution $M_t$ on a finite
time interval $0\leq t<T$ and the $M_t$'s converge uniformly to a
round point as $t\rightarrow T$.

b) The mean curvature flow has a smooth solution $M_t$ for all $0
\leq t < \infty$ and the $M_t$'s converge in the $C^\infty$-topology
to a smooth totally geodesic submanifold $M_\infty$ in $F^{n+d}(c)$.

In particular, $M$ is diffeomorphic to the standard $n$-sphere.
\end{problem}


\begin{thebibliography}{10}

\bibitem{Andrews-Baker}B.  Andrews and  C. Baker: \textit{Mean curvature flow of pinched submanifolds to
spheres}, J. Differential Geom. \textbf{85}, 357-395 (2010)


\bibitem{B} K. Brakke: The motion of a surface by its mean
curvature, Princeton, New Jersey: Princeton University Press, 1978.


\bibitem{chen}B. Y. Chen: \textit{On a theorem of
Fenchel-Borsuk-Willmore-Chern-Lashof }, Math. Ann. \textbf{194},
19-26 (1971)



\bibitem{CGG}Y. G. Chen, Y. Giga and S. Goto: \textit{Uniqueness and existence of viscosity solutions of
generalized mean curvature flow equations}, J. Differential Geom.
\textbf{33}, 749-786 (1991)


\bibitem{CH} J. Y. Chen and W. Y. He: \textit{A note on singular time of mean curvature
flow}, Math. Z. \textbf{266}, 921-931 (2010)


\bibitem{DWY} X. Z. Dai, G. F. Wei and R. G. Ye: \textit{Smoothing Riemannian metrics with Ricci curvature bounds},
Manu. Math. \textbf{90}, 49-61 (1996)

\bibitem{ES} L. C. Evans and J. Spruck:  \textit{Motion of level sets by mean
curvature, I}, J. Differential Geom. \textbf{33}, 635-681 (1991)

\bibitem{HS} D. Hoffman and J. Spruck: \textit{Sobolev and isoperimetric inequalities for
Riemannian submanifolds}, Comm. Pure Appl. Math. \textbf{27},
715-727 (1974)

\bibitem{H1} G. Huisken: \textit{Flow by mean curvature of convex surfaces into
spheres}, J. Differential Geom. \textbf{20}, 237-266
 (1984)

\bibitem{H2} G. Huisken: \textit{Contracting convex hyperserfaces in Riemannian manifolds by their mean curvature},
Invent. Math. \textbf{84}, 463-480 (1986)


\bibitem{H3} G. Huisken: \textit{Deforming hypersurfaces
of the sphere by their mean curvature}, Math. Z. \textbf{195},
205-219 (1987)


\bibitem{LS} N. Le and N. \v{S}e\v{s}um: \textit{On the extension of the mean curvature
flow}, to appear in Math. Z. DOI 10.1007/s00209-009-0637-1


\bibitem{S} N. \v{S}e\v{s}um: \textit{Curvature tensor under the Ricci flow},
Amer. J. Math. \textbf{127}, 1315-1324 (2005)



\bibitem{Shiohama-Xu-94} K. Shiohama and H. W. Xu, \textit{Rigidity and sphere theorems for
submanifolds}, Kyushu J. Math. I, \textbf{48}, 291-306 (1994); II,
\textbf{54}, 103-109 (2000)

\bibitem{Shiohama-Xu-97} K. Shiohama and H. W. Xu,  \textit{The topological
sphere theorem for compelete submanifolds}, Compositio Math.
\textbf{107}, 221-232 (1997)


\bibitem{Sm} K. Smoczyk: \textit{Longtime existence of the Lagrangian mean
curvature flow}, Calc. Var. \textbf{20}, 25-46 (2004)

\bibitem{SW}K. Smoczyk and M. T. Wang: \textit{Mean curvature flows for Lagrangian
submanifolds with convex potentials}, J. Differential Geom.
\textbf{62}, 243-257 (2002)

\bibitem{Topping}P. M. Topping, \textit{Relating diameter and mean curvature for submanifolds
of Euclidean space}, Comment. Math. Helv. \textbf{83}, 539-546
(2008)


\bibitem{WaB} B. Wang: \textit{On the conditions to extend Ricci flow},
International Math. Res. Notices vol. 2008

\bibitem{WaM1} M. T. Wang: \textit{Mean curvature flow of surfaces in Einstein
four-manifolds}, J. Differential Geom. \textbf{57}, 301-338 (2001)


\bibitem{WaM2} M. T. Wang: \textit{Long-time existence and convergence of graphic mean curvature flow
in arbitrary codimension}, Invent. math. \textbf{148}, 525-543
(2002)

\bibitem{WaM3} M. T. Wang: \textit{Lectures on mean curvature flows in higher codimensions},
Handbook of geometric analysis. No. 1, 525-543, Adv. Lect. Math.
(ALM), 7, Int. Press, Somerville, MA, 2008


\bibitem{X} H. W. Xu: \textit{$L_{n/2}$-pinching theorems for submanifolds with parallel mean curvature in a
sphere}, J. Math. Soc. Japan \textbf{46}, 503-515 (1994)


\bibitem{XJ2} H. W. Xu and J. R. Gu: \textit{An optimal differentiable sphere theorem for complete manifolds},
 Math. Res. Lett. \textbf{17}, 1111-1124
(2010)

\bibitem{XYZ1} H. W. Xu, F. Ye and E. T. Zhao: \textit{Extend mean curvature flow with finite integral
curvature}, to appear in Asian J. Math.


\bibitem{XYZ2} H. W. Xu, F. Ye and E. T. Zhao: \textit{The extension for mean curvature flow with finite integral curvature in Riemannian manifolds},
 to appear in Sci. China Math.


\bibitem{Xu-Zhao} H. W. Xu and E. T. Zhao: \textit{Topological and differentiable sphere theorems for complete submanifolds},
Comm. Anal. Geom. \textbf{17},  565-585  (2009)

\bibitem{Z} X. P. Zhu: Lectures on mean curvature flows, Studies in Advanced Mathematics
32, International Press, Somerville, 2002







\end{thebibliography}
\end{document}